\newcommand{\llambda}{\underline{\lambda}}
\newcommand{\ulambda}{\overline{\lambda}}
\newcommand{\AAA}{\mathcal{A}}
\newcommand{\BBB}{\mathcal{B}}
\newcommand{\DDD}{\mathcal{D}}
\newcommand{\GGG}{\mathcal{G}}
\newcommand{\PPP}{\mathcal{P}}
\newcommand{\LLL}{\mathcal{L}}
\newcommand{\SSS}{\mathcal{S}}
\newcommand{\UUU}{\mathcal{U}}
\newcommand{\RR}{\mathbb{R}}
\newcommand{\UU}{\mathbf{U}}
\newcommand{\NN}{\mathbb{N}}
\newcommand{\CC}{\mathbb{C}}
\newcommand{\ph}{\varphi}
\newcommand{\eps}{\varepsilon}
\newcommand{\llim}{\varliminf}
\newcommand{\ulim}{\varlimsup}
\newcommand{\htop}{h_\mathrm{top}\,}
\newcommand{\abs}[1]{\left\lvert#1\right\rvert}
\DeclareMathOperator{\diam}{diam}
\DeclareMathOperator{\Crit}{Crit}
\theoremstyle{plain}
\newtheorem{theorem}{Theorem}[section]
\newtheorem{proposition}[theorem]{Proposition}
\newtheorem{corollary}[theorem]{Corollary}
\newtheorem{lemma}[theorem]{Lemma}
\theoremstyle{definition}
\newtheorem{definition}[theorem]{Definition}
\theoremstyle{remark}
\newtheorem{example}[theorem]{Example}
\newtheorem*{remark}{Remark}
\numberwithin{equation}{section}
\begin{document}
\title{Bowen's equation in the non-uniform setting}

\begin{abstract}
We show that Bowen's equation, which characterises the Hausdorff dimension of certain sets in terms of the topological pressure of an expanding conformal map, applies in greater generality than has been heretofore established.  In particular, we consider an arbitrary subset $Z$ of a compact metric space and require only that the lower Lyapunov exponents be positive on $Z$, together with a tempered contraction condition.  Among other things, this allows us to compute the dimension spectrum for Lyapunov exponents for maps with parabolic periodic points, and to relate the Hausdorff dimension to the topological entropy for arbitrary subsets of symbolic space with the appropriate metric.
\end{abstract}


\author{Vaughn Climenhaga}
\date{\today}
\thanks{This work is partially supported by NSF grant 0754911.}
\address{Department of Mathematics \\ McAllister Building \\ Pennsylvania State University \\ University Park, PA 16802, USA.}
\urladdr{http://www.math.psu.edu/climenha/}
\email{climenha@math.psu.edu}
\maketitle

\section{Introduction}

The first connection between topological pressure and Hausdorff dimension was given by Bowen~\cite{rB79}, who showed that for certain compact sets (quasi-circles) $J\subset \CC$ which arise as invariant sets of fractional linear transformations $f$ of the Riemann sphere, the Hausdorff dimension $t=\dim_H J$ is the unique root of the equation
\begin{equation}\label{eqn:Bowen}
P_J(-t\ph) = 0,
\end{equation}
where $P_J$ is the topological pressure of the map $f\colon J\to J$, and $\ph$ is the geometric potential $\ph(z) = \log |f'(z)|$.  Later, Ruelle showed that Bowen's equation~\eqref{eqn:Bowen} gives the Hausdorff dimension of $J$ whenever $f$ is a $C^{1+\eps}$ conformal map on a Riemannian manifold and $J$ is a repeller.  More precisely, he proved the following~\cite[Proposition 4]{dR82}:
 
\begin{theorem}\label{thm:ruelle}
Let $M$ be a Riemannian manifold and $V\subset M$ be open, and let $f\colon V\to M$ be $C^{1+\eps}$ and conformal (that is, $Df(x)$ is a scalar multiple of an isometry for every $x\in V$).  Suppose $J\subset V$ is a repeller---that is, it has the following properties:
\begin{enumerate}[(1)]
\item \emph{$J$ is compact.}
\item \emph{$J$ is maximal:}  $J = \{ x\in V \mid f^n(x) \in V \text{ for all } n>0 \}$.
\item \emph{$f$ is topologically mixing on $J$:}  For every open set $U\subset V$ such that $U\cap J \neq \emptyset$, there exists $n$ such that $f^n(U) \supset J$.
\item \emph{$f$ is uniformly expanding on $J$:}  There exist $C>0$ and $r>1$ such that $\| Df^n v \| \geq C r^n \| v \|$ for every tangent vector $v\in T_x M$ and every $n\geq 1$.
\end{enumerate}
Let $\ph(x) = \log \|Df(x)\|$.  Then Bowen's equation~\eqref{eqn:Bowen} has a unique root, and this root is equal to the Hausdorff dimension of $J$.
\end{theorem}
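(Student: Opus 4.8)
The plan is to follow the classical route for conformal repellers, in three steps: unique solvability of Bowen's equation; an upper bound for $\dim_H J$ from a Markov partition together with bounded distortion; and a matching lower bound from thermodynamic formalism. For Step~1: conformality gives $S_n\ph(x):=\sum_{k=0}^{n-1}\ph(f^k(x))=\log\|Df^n(x)\|$, so hypothesis~(4) yields $S_n\ph(x)\ge n\log r-O(1)$. Writing $P_J(\psi)=\lim_n\tfrac1n\log\sum_{C\in\mathcal{C}_n}\sup_C\exp(S_n\psi)$ over the depth-$n$ cylinders $\mathcal{C}_n$ of a Markov partition (equivalently, via $(n,\delta)$-separated sets), this lower bound forces $P_J(-t\ph)\le P_J(-s\ph)-(t-s)\log r$ for $t>s$, so $t\mapsto P_J(-t\ph)$ is continuous and strictly decreasing; since $J$ is compact and $f$ is Lipschitz, $P_J(0)=\htop(f|_J)\in[0,\infty)$, while $P_J(-t\ph)\le\htop(f|_J)-t\log r\to-\infty$. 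Hence Bowen's equation has a unique root, which I call $s$.

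For Step~2 (the inequality $\dim_H J\le s$): hypotheses~(1), (2), (4) allow one to build a finite Markov partition $\mathcal{R}$ for $f|_J$, with $f^n$ mapping each $C\in\mathcal{C}_n$ homeomorphically onto an element of $\mathcal{R}$. This is where $C^{1+\eps}$ enters: $\log\|Df\|$ is H\"older and $f^{-n}$ contracts exponentially, so the distortion sums converge and there is $K\ge1$ with $K^{-1}\le\|Df^n(x)\|/\|Df^n(y)\|\le K$ whenever $x,y$ share a cylinder $C\in\mathcal{C}_n$; by conformality, $\diam C\asymp\exp(-S_n\ph(x_C))$. Taking $t>s$ gives $P_J(-t\ph)<0$, hence $\sum_{C\in\mathcal{C}_n}\exp(-tS_n\ph(x_C))\to0$; as $\mathcal{C}_n$ covers $J$ with mesh $O(r^{-n})\to0$, I get $\mathcal{H}^t(J)\le\mathrm{const}\cdot\liminf_n\sum_{C\in\mathcal{C}_n}(\diam C)^t=0$, so $\dim_H J\le t$, and then $\dim_H J\le s$ by letting $t\downarrow s$.

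For Step~3 (the inequality $\dim_H J\ge s$): hypothesis~(3) makes the subshift of finite type $(\Sigma,\sigma)$ coding $\mathcal{R}$ topologically mixing, and $\ph$ lifts to a H\"older potential on $\Sigma$; the Ruelle--Perron--Frobenius theorem then produces a unique equilibrium state for $-s\ph$, whose push-forward $\mu$ to $J$ is ergodic, fully supported, satisfies $h_\mu(f)=s\int\ph\,d\mu$ (because $P_J(-s\ph)=0$), and has the Gibbs property $\mu(C_n(x))\asymp\exp(-sS_n\ph(x))\asymp(\diam C_n(x))^s$. The remaining task is to pass from this cylinder estimate to a bound on genuine balls: a ball $B(x,\rho)$ need not be comparable to any single cylinder, so I would use the expansion to control how many depth-$n(x,\rho)$ cylinders it can meet and apply the Gibbs bounds uniformly, concluding $\mu(B(x,\rho))\le\mathrm{const}\cdot\rho^s$ for small $\rho$. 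Equivalently, the volume lemma gives $\lim_{\rho\to0}\tfrac{\log\mu(B(x,\rho))}{\log\rho}=\tfrac{h_\mu(f)}{\int\ph\,d\mu}=s$ for $\mu$-a.e.\ $x$ (a Ma\~n\'e-type argument). The mass distribution principle then yields $\dim_H J\ge\dim_H\mu=s$, which with Step~2 gives $\dim_H J=s$.

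I expect Step~3 to be the main obstacle --- specifically, producing the reference measure with the correct scaling and, more subtly, translating its Gibbs behaviour on dynamically defined cylinders into a genuine ball estimate. The $C^{1+\eps}$ hypothesis is essential throughout Steps~2 and~3, since it underlies both the Markov-partition geometry and the bounded-distortion control on which everything rests; conformality is what lets one replace the metric size of cylinders by Birkhoff sums of the single potential $\ph$, and topological mixing is what makes the thermodynamic formalism on $\Sigma$ available in the clean form used above.
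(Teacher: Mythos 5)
The paper does not prove Theorem~\ref{thm:ruelle}; it is quoted from Ruelle~\cite{dR82} as background. Your outline reproduces Ruelle's original measure-based argument and is correct in its large steps: Markov partition plus bounded distortion for the upper bound, and the Ruelle--Perron--Frobenius Gibbs state for $-s\ph$ together with the mass distribution principle for the lower bound. You also correctly flag the delicate point in Step~3---converting the cylinder estimate $\mu(C_n(x))\asymp(\diam C_n(x))^s$ into a genuine ball estimate---which for a conformal expanding repeller is handled by noting that bounded distortion forces any ball of radius comparable to $\diam C_n(x)$ to meet only a bounded number of depth-$n$ cylinders. However, this is genuinely different from the route the paper takes to its own, more general result (Theorem~\ref{thm:main}), of which Theorem~\ref{thm:ruelle} is a special case. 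Following Rugh~\cite{hhR08}, the paper deliberately avoids Markov partitions, transfer operators, and invariant measures: Lemma~\ref{lem:well-behaved} sandwiches the Bowen ball $B(x,n,\delta)$ between two metric balls whose radii are governed by $\lambda_n(x)$, and this is fed directly into a comparison of the set functions $m_P$ and $m_H$ that define the Carath\'eodory pressure and the Hausdorff measure. What the paper's approach buys is generality---it applies to arbitrary (possibly non-compact, non-invariant) subsets $Z$ on which the lower Lyapunov exponent is merely positive and contraction is tempered, a regime in which the thermodynamic machinery you invoke (Markov partition, RPF theorem, equilibrium state supported on $Z$) is unavailable. What your classical argument buys, in the compact uniformly expanding case, is the extra information that the equilibrium state realizes full Hausdorff dimension, an output the measure-free proof does not produce.
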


This result was eventually extended to the case where $f$ is $C^1$ by Gatzouras and Peres~\cite{GP97}.  One can also give a definition of conformal map in the case where $X$ is a metric space (not necessarily a manifold), and the analogue of Theorem~\ref{thm:ruelle} in this setting was proved by Rugh~\cite{hhR08}.

In all of these settings, one of the essential tools is the availability of geometric bounds that relate statically defined metric balls $B(x,r)$ (used in the definition of dimension) to dynamically defined Bowen balls $B(x,n,\delta)$ (used in the definition of pressure).  However, the above proofs differ in how these bounds are used.  The proofs given by Bowen, Ruelle, and Gatzouras and Peres all rely on the construction of a measure of full dimension (in particular, a measure that is equivalent to Hausdorff measure), which in turn relies on the aforementioned geometric bounds (among other things).  Rugh's proof, on the other hand, does not use measures and instead applies these bounds directly to the definitions of dimension and pressure.

These two methods of proof represent different approaches to the problem of using Bowen's equation to find the Hausdorff dimension of dynamically significant sets.  In this paper, we will follow the second approach (Rugh's) and avoid the use of measures; this will allow us to establish the analogue of Theorem~\ref{thm:ruelle} for a broad class of subsets of a repeller on which we may not have uniform expansion, and which need not carry any invariant measures.  First, however, we will mention some of the other settings in which the approach using measures of full dimension has been successful.

Working with maps in one real dimension, Urba\'nski~\cite{mU96} proved that the smallest root of~\eqref{eqn:Bowen} gives the Hausdorff dimension of a repeller $J$ that is expanding except on some set of indifferent fixed points, by finding a conformal measure that is the measure of full dimension.  Similar results for Julia sets of maps in one complex dimension were proved in~\cite{DU91, mU91}.  In fact, Bowen's equation is also known to give the Hausdorff dimension of the Julia set for a broad class of rational maps (those satisfying the \emph{topological Collet-Eckmann} condition) whose Julia sets even contain critical points~\cite{PRS03,PRS04}.  There are also situations where conformal measures can be built when $J$ is a non-compact set; for instance, when $J$ is the radial Julia set of a meromorphic function satisfying certain conditions~\cite{UZ04, MU08, MU10}.

Given a map $f$, all of the above results give the Hausdorff dimension of one very particular dynamically significant set $J$ via Bowen's equation.  It is natural to ask if one can find the Hausdorff dimension of subsets $Z\subset J$ via a similar approach.

For certain subsets, results in this direction are given by the multifractal analysis.  In the uniformly expanding case, the multifractal results in~\cite{BPS97,PW97,hW99} all boil down to the following result.  If $J$ is a conformal repeller and $\ph\colon J\to\RR$ is any H\"older continuous function, then for the one-parameter family of sets $K_\alpha \subset J$ given by
\[
K_\alpha = \left\{ x\in J \,\Big|\, \lim_{n\to\infty} \frac{S_n \ph(x)}{S_n \log \|Df(x)\|} = \alpha \right\},
\]
we may define a convex analytic function $T\colon \RR\to \RR$ implicitly by
\begin{equation}\label{eqn:implicitT}
P_J(q\ph - T(q)\log \|Df\|) = 0,
\end{equation}
and obtain $\dim_H K_\alpha$ as the Legendre transform of $T$:
\begin{equation}\label{eqn:dimHKa}
\dim_H K_\alpha = \inf_{q\in \RR} (T(q) - q\alpha).
\end{equation}
In the case $\ph\equiv 0$, $\alpha=0$, this reduces to Bowen's equation; for other values of $\ph$ and $\alpha$, this may be seen as a sort of (indirect) generalisation of Theorem~\ref{thm:ruelle}.  Analogous results for certain almost-expanding conformal maps with neutral fixed points are at the heart of the multifractal analyses in~\cite{PW99, kN00, GR09, GPR09, MU10}.

Once again, these results all rely on the construction of measures of full dimension on the sets $K_\alpha$ as Gibbs states $\nu_q$ for the family of potentials $q\ph - T(q)\log \|Df\|$, and so they do not generalise to more arbitrary subsets $Z \subset J$ (which may not support any invariant measures).  A more natural generalisation of Theorem~\ref{thm:ruelle} would be to obtain $\dim_H Z$ as the root of $P_Z(-t\log \|Df\|) = 0$ for some appropriate definition of $P_Z$ as the topological pressure \emph{on the set $Z$}, rather than on the entire repeller $J$.  (If such a generalisation is available, then the above multifractal results can be proved in a more general setting, where the measures $\nu_q$ are only required to be equilibrium states, and not necessarily Gibbs~\cite{vC10}.)

The appropriate definition of $P_Z$ was given by Pesin and Pitskel'~\cite{PP84}, characterising topological pressure as a Carath\'eodory dimension characteristic, and making sense of the expression $P_Z(\ph)$ for any subset $Z\subset J$.  (This extended the earlier definition by Bowen of topological entropy for non-compact sets~\cite{rB73}.)  Using the general theory of Carath\'eodory dimension characteristics introduced in~\cite{yP98}, Barreira and Schmeling~\cite{BS00} introduced the notion of the \emph{$u$-dimension} $\dim_u Z$ for positive functions $u$, showing that $\dim_u Z$ is the unique number $t$ such that $P_Z(-tu)=0$.  They also showed that for a subset $Z$ of a conformal repeller $J$, where we may take $u=\log \|Df\|>0$, we have $\dim_u Z=\dim_H Z$, and hence upon replacing $P_J$ with $P_Z$, the Hausdorff dimension of any subset $Z\subset J$ is given by Bowen's equation, whether or not $Z$ is compact or invariant.

Thus it has already been shown that in the uniformly expanding case, Theorem~\ref{thm:ruelle} holds not just for $J$ itself, but for any subset $Z\subset J$.  Furthermore, the aforementioned works of Urba\'nski \emph{et al} show that when we consider $J$ itself, there are many cases in which the requirement that $f$ be uniformly expanding can be replaced with rather weaker expansion properties.  However, there do not appear to be any results at present that combine these two directions, and give a Bowen's equation result for arbitrary sets $Z$ under properties weaker than uniform expansion (the closest results to this appear to be the multifractal results mentioned above).  Such a result is the purpose of this paper:  we show that the applicability of Bowen's equation to arbitrary $Z$ extends beyond the uniformly expanding case.

Indeed, given a conformal map $f$ without critical points or singularities, the only requirement we place on the expansion properties of $f$ is that every point $x$ of $Z$ has positive lower Lyapunov exponent, and that there not be too much contraction along the orbit of $x$ (see~\eqref{eqn:tempered} below---this is automatically satisfied if the Lyapunov exponent of $x$ exists or if $f$ is nowhere contracting).  We do not require any uniformity in these hypotheses; $Z$ may contain points with arbitrarily small or large Lyapunov exponents.  Furthermore, these hypotheses are only required to hold at points in $Z$, and not for other points in phase space.

This result has an immediate application to the multifractal formalism; we show that for any conformal map without critical points or singularities (no expansion properties are required), it allows us to compute the dimension spectrum for Lyapunov exponents directly from the entropy spectrum for Lyapunov exponents, which can in turn be obtained from the pressure function, provided the latter has nice properties.  Furthermore, this result is used in~\cite{vC10} to compute the dimension spectrum for pointwise dimensions given certain thermodynamic information.  Hopefully, this will eliminate some of the need for case-by-case analysis of various systems, and allow for more standardised techniques.

\section{Definitions and statement of result}

We consider a continuous map $f$ acting on a compact metric space $X$.

\begin{definition}
We say that $f\colon X\to X$ is \emph{conformal} with factor $a(x)$ if for every $x\in X$ we have
\begin{equation}\label{eqn:conformal}
a(x) = \lim_{y\to x} \frac{d(f(x),f(y))}{d(x,y)},
\end{equation}
where  $a\colon X\to [0,\infty)$ is continuous.  We denote the Birkhoff sums of $\log a$ by
\[
\lambda_n(x) = \frac 1n S_n (\log a)(x) = \frac 1n \sum_{k=0}^{n-1} \log a(f^k(x));
\]
the lower and upper limits of this sequence are the \emph{lower Lyapunov exponent} and \emph{upper Lyapunov exponent}, respectively:
\[
\llambda(x) = \llim_{n\to\infty} \lambda_n(x), \qquad
\ulambda(x) = \ulim_{n\to\infty} \lambda_n(x).
\]
If the two agree (that is, if the limit exists), then their common value is the \emph{Lyapunov exponent}:
\[
\lambda(x) = \lim_{n\to\infty} \lambda_n(x).
\]
Because $a$ is assumed to be continuous on a compact space $X$, it is bounded above (and hence $\ulambda(x)$ is as well); we do not allow maps with singularities.
\end{definition}

For later reference and uniformity of notation, we recall several equivalent definitions of Hausdorff dimension.

\begin{definition}\label{def:dimh}
Given $Z\subset X$ and $\eps>0$, let $\DDD(Z,\eps)$ denote the collection of countable open covers $\{ U_i \}_{i=1}^\infty$ of $Z$ for which $\diam U_i\leq \eps$ for all $i$.  For each $s\geq 0$, consider the set functions
\begin{align*}
m_H(Z,s,\eps) &= \inf_{\DDD(Z,\eps)} \sum_{U_i} (\diam U_i)^s, \\
m_H(Z,s) &= \lim_{\eps\to 0} m_H(Z,s,\eps).
\end{align*}
The \emph{Hausdorff dimension} of $Z$ is
\[
\dim_H Z = \inf \{ s>0 \mid m_H(Z,s)=0 \} = \sup \{ s>0 \mid m_H(Z,s)=\infty \}.
\]
It is straightforward to show that $m_H(Z,s)=\infty$ for all $s<\dim_H Z$, and that $m_H(Z,s)=0$ for all $s>\dim_H Z$.

One may equivalently define Hausdorff dimension using covers by open balls rather than arbitrary open sets; let $\DDD^b(Z,\eps)$ denote the collection of countable sets $\{(x_i,r_i)\}\subset Z\times (0,\eps]$ such that $Z\subset \bigcup_i B(x_i,r_i)$, and then define $m_H^b$ by
\begin{equation}\label{eqn:mHb}
m_H^b(Z,s,\eps) = \inf_{\DDD^b(Z,\eps)} \sum_i (\diam B(x_i,r_i))^s.
\end{equation}
Finally, define $m_H^b(Z,s)$ and $\dim_H^b Z$ by the same procedure as above; then  Proposition~\ref{prop:dimsequal} shows that $\dim_H^b Z = \dim_H Z$, so we are free to use either definition.

It is natural to replace~\eqref{eqn:mHb} with
\begin{equation}\label{eqn:mHb'}
m_H^{b'}(Z,s,\eps) = \inf_{\DDD^b(Z,\eps)} \sum_i (2r_i)^s;
\end{equation}
however, the two quantities are not necessarily equal, as we may have $\diam B(x,r) < 2r$ (if $x$ is an isolated point, for example, or if $X$ is homeomorphic to a Cantor set).  Nevertheless, Proposition~\ref{prop:dimsequal} shows that the resulting critical value $\dim_H^{b'} Z$ is equal to $\dim_H Z$.  (This result is straightforward, but does not appear in the standard references on Hausdorff dimension, so we include a proof below for completeness.)
\end{definition}

The following definition defines topological pressure for arbitrary sets (which are not necessarily compact or invariant) as a Carath\'eodory dimension characteristic; mirroring the definition of Hausdorff dimension.  

\begin{definition}
Let $X$ be a compact metric space and consider a map $f\colon X\to X$.  The Bowen ball of radius $\delta$ and order $n$ is
\[
B(x,n,\delta) = \{y\in X \mid d(f^k(y),f^k(x))<\delta \text{ for all } 0\leq k\leq n \}.
\]
Now fix a potential function $\ph\colon X\to \RR$.  Given $Z\subset X$, $\delta>0$, and $N\in\NN$, let $\PPP(Z,N,\delta)$ be the collection of countable sets $\{ (x_i,n_i) \} \subset Z\times \{N,N+1,\dots\}$ such that $Z \subset \bigcup_i B(x_i,n_i,\delta)$.  For each $s\in\RR$, consider the set functions
\begin{equation}\label{eqn:mZa}
\begin{aligned}
m_P(Z,s,\ph,N,\delta)&= \inf_{\PPP(Z,N,\delta)} \sum_{(x_i,n_i)} 
\exp\left(-n_i s + S_{n_i} \ph(x_i) \right), \\
m_P(Z,s,\ph,\delta)&=\lim_{N\to\infty} m_P(Z,s,\ph,N,\delta).
\end{aligned}
\end{equation}
This function is non-increasing in $s$, and takes values $\infty$ and $0$ at all but at most one value of $s$.  Denoting the critical value of $s$ by
\begin{align*}
P_Z(\ph,\delta) &= \inf \{s\in\RR \mid m_P(Z,s,\ph,\delta)=0\} \\
&= \sup \{s\in\RR \mid m_P(Z,s,\ph,\delta)=\infty\},
\end{align*}
we get $m_P(Z,s,\ph,\delta)=\infty$ when $s<P_Z(\ph,\delta)$, and $0$ when $s>P_Z(\ph,\delta)$.

The \emph{topological pressure} of $\ph$ on $Z$ is $P_Z(\ph) = \lim_{\delta\to 0} P_Z(\ph,\delta)$; the limit exists because given $\delta_1 < \delta_2$, we have $\PPP(Z,N,\delta_1) \subset \PPP(Z,N,\delta_2)$, and hence $m_P(Z,s,\ph,\delta_1) \geq m_P(Z,s,\ph,\delta_2)$, so $P_Z(\ph,\delta_1) \geq P_Z(\ph,\delta_2)$.  

In the particular case $\ph=0$, we get the topological entropy $\htop(Z) = P_Z(0) \geq 0$, which exactly mirrors the second definition of Hausdorff dimension, replacing the balls $B(x_i, r_i)$ with Bowen balls $B(x_i,n_i,\delta)$.
\end{definition}

\begin{remark}
We show below (Proposition~\ref{prop:defsequal}) that if $f$ and $\ph$ are continuous (which is the case in this paper), then this definition is equivalent to the definition given by Pesin and Pitskel'~\cite{PP84} (see also~\cite{yP98}).  In particular, when $f$ and $\ph$ are continuous, and $Z$ is compact and invariant, this gives us an alternate method to compute the classical topological pressure.  

The definition given here is easier to use for our present purposes, as we will consider the case where $\ph(x)$ is a multiple of $\log a(x)$, and so the sums $S_{n_i} \ph(x_i)$ which appear in the definition of pressure are proportional to the amount of expansion \emph{along the orbit of $x_i$}.  This fact allows us to relate the Bowen balls $B(x_i,n_i,\delta)$ centred at $x_i$ to the usual balls $B(x_i,r_i)$, for appropriate values of $r_i$, and hence to draw a connection between the definition of Hausdorff dimension (via balls) and the present definition of topological pressure.
\end{remark}

Our main result relates the Hausdorff dimension of $Z$ to the topological pressure of $\log a$ on $Z$, provided every point in $Z$ has positive lower Lyapunov exponent and satisfies the following \emph{tempered contraction} condition:
\begin{equation}\label{eqn:tempered}
\inf_{\substack{n\in \NN \\ 0\leq k\leq n}} \{ S_{n-k} \log a(f^k(x)) + n\eps \} > -\infty \text{ for every $\eps>0$.}
\end{equation}
Denote by $\BBB$ the set of all points in $X$ which satisfy~\eqref{eqn:tempered}.

Observe that~\eqref{eqn:tempered} is automatically satisfied if $a(x)\geq 1$ for all $x\in X$, and so in this case $\BBB=X$.  Another case in which $x$ satisfies~\eqref{eqn:tempered} is when $x$ has \emph{bounded contraction}:  $\inf \{S_{n-k} \log a(f^k(x))\mid n\in \NN, 0\leq k\leq n \} > -\infty$.

Proposition~\ref{prop:Lyap-tempered} shows that if the Lyapunov exponent of $x$ exists---that is, if $\llambda(x) = \ulambda(x)$---then $x$ satisfies~\eqref{eqn:tempered}.

Given $E\subset\RR$, we denote by $\AAA(E)$ the set of points along whose orbits all the asymptotic exponential expansion rates of the map $f$ lie in $E$:
\[
\AAA(E) = \{ x\in X \mid [\llambda(x),\ulambda(x)]\subset E\}.
\]
In particular, $\AAA((0,\infty))$ is the set of all points for which $\llambda(x)>0$.  Our main result deals with subsets $Z\subset X$ that lie in both $\AAA((0,\infty))$ and $\BBB$.  (Observe that by Proposition~\ref{prop:Lyap-tempered}, $\AAA(\alpha) = \AAA(\{\alpha\}) \subset \BBB$ for every $\alpha>0$.)

\begin{theorem}\label{thm:main}
Let $X$ be a compact metric space and $f\colon X\to X$ be continuous and conformal with factor $a(x)$.  Suppose that $f$ has no critical points and no singularities---that is, that $0<a(x)<\infty$ for all $x\in X$.  Consider $Z\subset \AAA((0,\infty)) \cap \BBB$.  Then the Hausdorff dimension of $Z$ is given by
\begin{equation}\label{eqn:dimhz}
\begin{aligned}
\dim_H Z = t^* &= \sup\{t\geq 0 \mid P_Z(-t\log a) > 0 \} \\
&= \inf\{t\geq 0 \mid P_Z(-t\log a) \leq 0 \}.
\end{aligned}
\end{equation}
Furthermore, if $Z\subset \AAA((\alpha,\infty))\cap \BBB$ for some $\alpha>0$ (that is, the lower Lyapunov exponents of points in $Z$ are \emph{uniformly} positive), then $t^*$ is the unique root of Bowen's equation
\begin{equation}\label{eqn:Bowen2}
P_Z(-t\log a) = 0.
\end{equation}
Finally, if $Z\subset \AAA(\alpha)$ for some $\alpha>0$, then $P_Z(-t\log a) = \htop Z - t\alpha$, and hence
\begin{equation}\label{eqn:Bowen3}
\dim_H Z = \frac 1\alpha \htop Z.
\end{equation}
\end{theorem}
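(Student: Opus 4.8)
The plan is to reduce the whole statement to a single geometric estimate relating the dynamically defined Bowen balls to the metric balls used to define dimension, and then feed this estimate into the definitions of pressure and Hausdorff dimension. \emph{The comparison lemma.} Fix a small $\eps>0$. Using only $Z\subset\AAA((0,\infty))$, first write $Z$ as a countable increasing union of sets $A_i$, each carrying a uniform positive lower bound $\beta_i$ on $\llambda$ (this uses nothing about $\eps$); then refine each $A_i$ into countably many pieces $Z_j$ on which, in addition, the inequality $\lambda_n(x)\ge\beta_i$ holds for all $n$ past a fixed threshold and the constant $M$ in~\eqref{eqn:tempered} (for this $\eps$) is bounded. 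On each such $Z_j$ I would establish that there is a threshold $\delta_j>0$ so that for all $0<\delta\le\delta_j$, all $x\in Z_j$, and all large $n$,
\[ B\bigl(x,\,c_j\delta e^{-n\eps}e^{-S_n(\log a)(x)}\bigr)\ \subset\ B(x,n,\delta)\ \subset\ B\bigl(x,\,C_j\delta e^{n\eps}e^{-S_n(\log a)(x)}\bigr). \]
The right inclusion comes from the time-$n$ constraint $d(f^n y,f^n x)<\delta$ together with the bound $d(f^n y,f^n x)\ge e^{-n\eps}e^{S_n(\log a)(x)}d(y,x)$, obtained by multiplying one-step conformality estimates along the orbit segment inside $B(x,n,\delta)$; the left inclusion uses in addition~\eqref{eqn:tempered} to bound $\max_{0\le k\le n}S_k(\log a)(x)$ by $S_n(\log a)(x)+n\eps+M$, so that among the constraints cutting out $B(x,n,\delta)$ the one at time $n$ is, up to a factor $e^{n\eps}$, the binding one.

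Granting the lemma, $\dim_H Z=t^*$ follows by converting covers. The term of $m_P(Z,s,-t\log a,\delta)$ at $s=0$ is $e^{-tS_{n_i}(\log a)(x_i)}$, which by the lemma is comparable, up to a factor $(2\delta)^{\mp t}e^{\pm tn_i\eps}$, to the term $(2r_i)^t$ in the ball definition $m_H^{b'}$ of Hausdorff dimension (Definition~\ref{def:dimh}, Proposition~\ref{prop:dimsequal}); so a Bowen-ball cover of $Z_j$ of orders $\ge N$ passes to a metric-ball cover of radius $O(e^{-N(\beta_i-\eps)})\to 0$, and conversely a sufficiently fine metric-ball cover passes to a Bowen-ball cover of arbitrarily large orders. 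Chasing the resulting inequalities between the two set functions — absorbing $e^{\pm tn_i\eps}$ as an $O(\eps)$ shift of the exponent $s$, harmless as $\eps\to0$ — and using countable stability of $\dim_H$ and of topological pressure, one gets: if $t<\dim_H Z$, choose $i$ with $t<\dim_H A_i$ and then $\eps$ small relative to $\beta_i$ (legitimate, as $\beta_i$ is chosen first); a cover conversion on a suitable piece $Z_j\subset A_i$ with $\dim_H Z_j>t'>t$ forces $m_P(Z_j,-t'\eps,-t'\log a,\delta)=\infty$, and trading $t'$ back for $t$ via $S_{n_i}(\log a)(x_i)\ge\beta_i n_i$ yields $P_{Z_j}(-t\log a,\delta)>0$, hence $P_Z(-t\log a)>0$ and $t\le t^*$; conversely, if $t>\dim_H Z$ the reverse conversion forces $m_P(Z_j,t\eps,-t\log a,\delta)=0$ on every piece, hence $P_Z(-t\log a)\le t\eps$ for every $\eps$, hence $\le 0$ and $t\ge t^*$. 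This gives~\eqref{eqn:dimhz}.

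For the two refinements: directly from the definitions, $t\mapsto P_Z(-t\log a)$ is non-increasing, and since $\log a$ is bounded it is Lipschitz, hence continuous, so $t^*$ is a genuine zero of the pressure whenever the latter changes sign. If moreover $Z\subset\AAA((\alpha,\infty))\cap\BBB$, then writing $Z$ as the increasing union of $\{x\in Z:\lambda_n(x)\ge\alpha\text{ for all }n\ge k\}$ and using $S_{n_i}(\log a)(x_i)\ge\alpha n_i$ there gives $P_Z(-t'\log a)\le P_Z(-t\log a)-(t'-t)\alpha$ for $t<t'$; the pressure is then strictly decreasing, so~\eqref{eqn:Bowen2} has a unique root, equal to $t^*=\dim_H Z$. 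If instead $Z\subset\AAA(\alpha)$, then for each $m$ the sets $\{x\in Z:\abs{\lambda_n(x)-\alpha}\le 1/m\text{ for all }n\ge k\}$ increase in $k$ to $Z$, and on each of them $S_{n_i}(\log a)(x_i)=\alpha n_i+O(n_i/m)$ uniformly; comparing $m_P(\cdot,s,-t\log a,\delta)$ with $m_P(\cdot,s+t\alpha,0,\delta)$ yields $\abs{P_Z(-t\log a)-(\htop Z-t\alpha)}\le t/m$, and $m\to\infty$ gives $P_Z(-t\log a)=\htop Z-t\alpha$, giving $t^*=\frac1\alpha\htop Z$, which is~\eqref{eqn:Bowen3}.

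The main obstacle is the comparison lemma, and inside it the control of the distortion of $f^n$ on a Bowen ball with only the infinitesimal conformality~\eqref{eqn:conformal} available—there is no H\"older regularity of $\log a$ and no uniform expansion to fall back on. The saving observation is that inside $B(x,n,\delta)$ each of the $n$ comparison steps happens at scale $\le\delta$, so each contributes a multiplicative error within $e^{\pm\eps}$ of $1$ once $\delta$ is small, and the total distortion is therefore at most $e^{\pm n\eps}$—an error we can afford, since $\eps$ is sent to $0$ at the end. (Making ``$\delta$ small enough'' effective requires controlling, uniformly enough along the relevant orbits, how much $f$ deviates from~\eqref{eqn:conformal} at small scales, using continuity of $a$ and compactness of $X$.) The two hypotheses on $Z$ play complementary roles here: positivity of the lower Lyapunov exponents forces $e^{-S_n(\log a)(x)}\to0$, so Bowen balls of large order are genuinely small metric balls and the covers produced in both directions are admissible; and tempered contraction prevents an initial segment of the orbit of $x$ from contracting so much that it, rather than the full $n$-th iterate, controls the size of $B(x,n,\delta)$.
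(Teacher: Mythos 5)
Your proposal is correct and follows essentially the same route as the paper. Your "comparison lemma'' on each piece $Z_j$ is exactly the paper's Lemma~\ref{lem:well-behaved}, with $e^{-S_n(\log a)(x)} = e^{-n\lambda_n(x)}$ and the constants $c_j, C_j$ corresponding to the paper's $\eta(x,\eps)$ (on the lower inclusion) and the absorption of the $\eps$-distortion into the exponent (on the upper); the countable decomposition into pieces $A_i$ with a uniform $\llambda$-lower bound, each further split into $Z_j$ with a uniform tempered-contraction constant, matches the paper's use of $Z_k = Z\cap\AAA((\alpha_k,\infty))$ followed by the auxiliary $Z_m$ sets inside Lemma~\ref{lem:dimheqt}; and the cover-conversion argument, trading $e^{\pm t n_i\eps}$ for a shift of $s$ by $\pm t\eps$ and then using countable stability of pressure and dimension, is the paper's Lemma~\ref{lem:dimheqt}. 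Your handling of the two refinements — strict monotonicity of $t\mapsto P_Z(-t\log a)$ from a uniform lower bound on $\llambda$, and the exact linearity $P_Z(-t\log a)=\htop Z - t\alpha$ when $Z\subset\AAA(\alpha)$ by comparing the set functions at shifted $s$ — coincides with Proposition~\ref{prop:decreasing} and Corollary~\ref{cor:uniqueroot}. The one place to be careful when fleshing out the comparison lemma is the left inclusion: the estimate $d(f^k x, f^k y) \le e^{S_k\log a(x)+k\eps}d(x,y)$ is only available once you know the orbit segment stays within scale $\delta$, so it must be run inductively in $k$ (shrink $d(x,y)$ below $\delta\min_{k\le n}e^{-k(\lambda_k(x)+\eps)}$ first, then use tempered contraction to replace the minimum by the $k=n$ term at the cost of a factor $\eta$ and an extra $\eps$), exactly as the paper does.
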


Before proceeding to specific examples and to the proofs, we make a few remarks on Theorem~\ref{thm:main} in some standard settings.
\begin{enumerate}
\item  For expanding conformal maps ($a(x)>1$ for all $x$), we have $\BBB = X$, and Theorem~\ref{thm:main} reduces to Barreira and Schmeling's generalisation of Theorem~\ref{thm:ruelle}, although we work in the slightly more general setting where $X$ need not be a manifold.
\item  For almost expanding conformal maps (maps which are expanding away from a collection of indifferent periodic points), we have $a(x)\geq 1$ for all $x$, and so $\BBB = X$; thus the theorem applies, showing that Bowen's formula gives the Hausdorff dimension of any set which does not contain any points with zero lower Lyapunov exponent.  This complements the results in~\cite{DU91, mU91, mU96}, which give the Hausdorff dimension of the \emph{entire} Julia set for a large family of almost expanding conformal maps, but have nothing to say about arbitrary subsets of the Julia set.  (Observe that because the Julia set contains points with zero Lyapunov exponent, Theorem~\ref{thm:main} does not give the Hausdorff dimension of the entire Julia set.)
\item  For maps with some contracting regions ($a(x)<1$) but no critical points ($a(x)=0$), we cannot rule out the possibility that $\BBB \neq X$.  However, the result still holds for $Z\subset X$ as long as every point $x\in Z$ satisfies~\eqref{eqn:tempered} and has positive lower Lyapunov exponent.  In particular, if the Lyapunov exponent is constant and positive on $Z$, then~\eqref{eqn:Bowen3} relates the Hausdorff dimension of $Z$ to the topological entropy of $Z$.
\end{enumerate}

\section{An application to the multifractal formalism}\label{sec:multifractal}

The multifractal formalism characterises dynamical systems in terms of various multifractal spectra, of which an overview may be found in~\cite{BPS97}.  The present result gives a general relationship between two of these spectra, which are both defined in terms of the level sets of Lyapunov exponents of a conformal map:
\[
\AAA(\alpha) = \AAA(\{\alpha\}) = \{ x\in X \mid \lambda(x) = \alpha \}.
\]
The \emph{dimension spectrum for Lyapunov exponents} of $f$ is
\[
\LLL_D(\alpha) = \dim_H \AAA(\alpha),
\]
and the \emph{entropy spectrum for Lyapunov exponents} of $f$ is
\[
\LLL_E(\alpha) = \htop \AAA(\alpha).
\]
These spectra have been studied for conformal repellers by Weiss~\cite{hW99}; in the non-uniform setting, they have been studied for Manneville--Pomeau maps (that is, one-dimensional Markov maps with a neutral fixed point) by Pollicott and Weiss~\cite{PW99}, Nakaishi~\cite{kN00}, and Gelfert and Rams~\cite{GR09}, and for rational maps by Gelfert, Przytycki, and Rams~\cite{GPR09}.

Of the two, $\LLL_E$ is \emph{a priori} the easier to investigate, as it can in many cases be obtained as the Legendre transform of the function
\begin{equation}\label{eqn:T}
T \colon t\mapsto P_X(-t\log a).
\end{equation}
Indeed, the following theorem is proved in~\cite{vC10}:
 
\begin{theorem}\label{thm:blackbox}
Let $f\colon X\to X$ be conformal with factor $a(x)$, and suppose that $f$ has no critical points or singularities, so that $0<a(x)<\infty$ for all $x\in X$.  Let $t_1,t_2 \in [-\infty, \infty]$ be such that the following hold for every $t\in (t_1,t_2)$:
\begin{enumerate}
\item An equilibrium state exists for the potential function $-t\log a$ (this is true for all $t$ if $f$ is expansive);
\item The function $T$ given in~\eqref{eqn:T} is differentiable at $t$.
\end{enumerate}
Let $\alpha_1 = -\lim_{t\to t_2^-} T'(t)$ and $\alpha_2 = -\lim_{t\to t_1^+} T'(t)$.  Then $\alpha_1 < \alpha_2$, and for all $\alpha \in (\alpha_1, \alpha_2)$, the entropy spectrum for Lyapunov exponents is given by
\begin{equation}\label{eqn:lyapspec}
\LLL_E(\alpha) = \inf_{t\in \RR} ( T(t) - t\alpha ).
\end{equation}
If in addition $T$ is strictly convex on $(t_1,t_2)$, then $\LLL_E$ is differentiable on $(\alpha_1,\alpha_2)$, and has the same regularity as $T$.
\end{theorem}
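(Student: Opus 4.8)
The plan is to deduce~\eqref{eqn:lyapspec} from two matching estimates — an upper bound coming from the last clause of Theorem~\ref{thm:main} together with monotonicity of topological pressure, and a lower bound coming from the equilibrium states provided by hypothesis~(1) — and then to obtain the regularity statement from the standard theory of the Legendre transform.

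For the upper bound I would fix $\alpha>0$ and an arbitrary $t\in\RR$. At every point of $\AAA(\alpha)$ the Lyapunov exponent exists and equals $\alpha$, so $\AAA(\alpha)\subset\BBB$ by Proposition~\ref{prop:Lyap-tempered}, and $\AAA(\alpha)\subset\AAA((0,\infty))$ since $\alpha>0$; thus Theorem~\ref{thm:main} applies with $Z=\AAA(\alpha)$, and its last clause gives $P_{\AAA(\alpha)}(-t\log a)=\htop\AAA(\alpha)-t\alpha$. Since topological pressure is monotone under inclusion of the underlying set, $P_{\AAA(\alpha)}(-t\log a)\le P_X(-t\log a)=T(t)$. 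Hence $\htop\AAA(\alpha)\le T(t)+t\alpha$ for every $t$, so $\LLL_E(\alpha)\le\inf_{t\in\RR}(T(t)+t\alpha)$.

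For the lower bound — which I expect to be the crux — I would fix $\alpha\in(\alpha_1,\alpha_2)$. (Note that $\alpha_1\le\alpha_2$ automatically, since convexity of $T$ on $\RR$ makes $T'$ non-decreasing; equality holds only when $T$ is affine on $(t_1,t_2)$, in which case the remaining assertions are vacuous.) Convexity and differentiability of $T$ on $(t_1,t_2)$ make it $C^1$ there, so $-T'$ is continuous on $(t_1,t_2)$ and decreases from $\alpha_2$ to $\alpha_1$; the intermediate value theorem therefore yields $t_\alpha\in(t_1,t_2)$ with $-T'(t_\alpha)=\alpha$, and convexity of $T$ on all of $\RR$ makes $t_\alpha$ a global minimiser of $t\mapsto T(t)+t\alpha$, so that $\inf_{t\in\RR}(T(t)+t\alpha)=T(t_\alpha)+t_\alpha\alpha$. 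By hypothesis~(1) there is an equilibrium state for $-t_\alpha\log a$; for any such state $\nu$ the affine map $s\mapsto h_\nu-s\int\log a\,d\nu$ lies below the graph of $T$ and touches it at $s=t_\alpha$, so differentiability of $T$ at $t_\alpha$ forces its slope $-\int\log a\,d\nu$ to equal $T'(t_\alpha)$, i.e.\ $\int\log a\,d\nu=\alpha$. Passing to an ergodic component (again an equilibrium state for $-t_\alpha\log a$, hence again with $\int\log a\,d\nu=\alpha$ and, by the equilibrium relation, with $h_\nu=P_X(-t_\alpha\log a)+t_\alpha\alpha=T(t_\alpha)+t_\alpha\alpha$), I may take $\nu$ to be ergodic. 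Then along any $\nu$-generic orbit the empirical measures converge to $\nu$, so by continuity of $\log a$ the Lyapunov exponent along that orbit equals $\int\log a\,d\nu=\alpha$, whence the set $G_\nu$ of $\nu$-generic points is contained in $\AAA(\alpha)$. Bowen's theorem~\cite{rB73} that $G_\nu$ has topological entropy exactly $h_\nu$ now gives $\htop\AAA(\alpha)\ge h_\nu=T(t_\alpha)+t_\alpha\alpha=\inf_{t\in\RR}(T(t)+t\alpha)$. Together with the upper bound this establishes~\eqref{eqn:lyapspec} and exhibits $\LLL_E$ on $(\alpha_1,\alpha_2)$ as the Legendre transform of $T$.

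Turning to regularity, suppose in addition that $T$ is strictly convex on $(t_1,t_2)$. Then $-T'$ is a strictly decreasing continuous bijection of $(t_1,t_2)$ onto $(\alpha_1,\alpha_2)$, so $\alpha\mapsto t_\alpha$ is its continuous inverse; differentiating $\LLL_E(\alpha)=T(t_\alpha)+t_\alpha\alpha$ and cancelling via $T'(t_\alpha)+\alpha=0$ gives $\LLL_E'(\alpha)=t_\alpha$, so $\LLL_E\in C^1(\alpha_1,\alpha_2)$, and the remaining smoothness (or analyticity) of $\LLL_E$ follows from the corresponding implicit function theorem applied to $T'(t)+\alpha=0$. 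The main obstacle in all of this is the lower bound: one has to produce an \emph{ergodic} invariant measure that is carried by the generally non-compact and non-invariant set $\AAA(\alpha)$, whose Lyapunov exponent is exactly $\alpha$, and whose entropy attains the extremal value of the Legendre transform. It is precisely the differentiability hypothesis on $T$ that makes this possible — it forces every equilibrium state of $-t_\alpha\log a$ to have Lyapunov exponent $\alpha$ — with hypothesis~(1) supplying the measure and Bowen's generic-points theorem turning it into the entropy estimate needed for a non-compact set.
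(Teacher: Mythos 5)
The paper does not actually contain a proof of Theorem~\ref{thm:blackbox}: the author explicitly defers it to the companion preprint~\cite{vC10} (``the following theorem is proved in~\cite{vC10}''). There is therefore no argument in this paper to compare yours against, and I can only assess your proposal on its own terms. The overall structure you propose --- upper bound from the third clause of Theorem~\ref{thm:main} combined with monotonicity of pressure under inclusion, lower bound by producing an ergodic equilibrium state with Lyapunov exponent $\alpha$ and invoking Bowen's generic-points theorem~\cite{rB73}, and regularity via the inverse function theorem applied to $T'(t)+\alpha=0$ --- is exactly the route one would expect given the ingredients assembled in this paper, and the argument is essentially sound.

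One issue deserves explicit mention because it is a genuine discrepancy between your computation and the statement as printed. With the convention $T(t)=P_X(-t\log a)$ of~\eqref{eqn:T}, your upper bound reads $\htop\AAA(\alpha)\le T(t)+t\alpha$ for all $t$, and your lower bound produces an ergodic equilibrium state $\nu$ for $-t_\alpha\log a$ with $h_\nu=T(t_\alpha)+t_\alpha\alpha$, where $T'(t_\alpha)=-\alpha$; since $T'(t_\alpha)+\alpha=0$, the function $t\mapsto T(t)+t\alpha$ is stationary (hence minimal) at $t_\alpha$, whereas $t\mapsto T(t)-t\alpha$ is not. So what you actually prove is
\[
\LLL_E(\alpha)=\inf_{t\in\RR}\bigl(T(t)+t\alpha\bigr),
\]
with a $+$ sign, rather than the $-$ sign appearing in~\eqref{eqn:lyapspec}. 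You are right and the displayed formula in the theorem carries a sign error (or reflects an unstated alternative convention in~\cite{vC10}); you should say so explicitly rather than asserting that your bounds ``establish~\eqref{eqn:lyapspec}'' as written.

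Two smaller points. First, passing to an ergodic component that is again an equilibrium state is standard but not automatic: it uses that the entropy map $\mu\mapsto h_\mu$ and the map $\mu\mapsto\int\log a\,d\mu$ are both affine, so that in the ergodic decomposition of an equilibrium state almost every component attains the variational supremum; a sentence to this effect would tighten the argument. Second, as you note yourself, $\alpha_1<\alpha_2$ is not forced by hypotheses (1)--(2) alone --- it fails precisely when $T$ is affine on $(t_1,t_2)$ --- so the theorem's unconditional claim of strict inequality should really be read modulo that degenerate case, which is worth flagging rather than glossing over.
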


Once we know $\LLL_E(\alpha)$ for a given $\alpha>0$, we may apply~\eqref{eqn:Bowen3} to the level set $\AAA(\alpha)$, obtaining
\[
\dim_H \AAA(\alpha) = \frac{1}{\alpha} \htop \AAA(\alpha).
\]
Thus the dimension spectrum for Lyapunov exponents is determined by the entropy spectrum for Lyapunov exponents as follows:
\begin{equation}\label{eqn:LDviaLE}
\LLL_D(\alpha) = \frac{1}{\alpha} \LLL_E(\alpha),
\end{equation}
for all $0 < \alpha < \infty$; in conjunction with Theorem~\ref{thm:blackbox}, this establishes the multifractal formalism for both spectra when $T$ is differentiable.

\begin{example}
In the setting of Theorem~\ref{thm:ruelle}, where $f$ is a $C^{1+\eps}$ expanding conformal map on a repeller $J$, it is well-known that the pressure function $T \colon t \mapsto P_J(-t\log a)$ is real analytic and strictly convex (provided $\log a$ is not cohomologous to a constant, or equivalently, that the measure of maximal dimension and the measure of maximal entropy do not coincide).  It is shown in~\cite{hW99} that in this case the dimension spectrum for Lyapunov exponents is real analytic on an interval $(\alpha_1, \alpha_2)$, and may be obtained in terms of the Legendre transform of the pressure function.\footnote{Weiss also claims that the spectrum is concave, but Iommi and Kiwi have shown that there are examples in which this is not the case~\cite{IK09}.}

The proof in~\cite{hW99} is roundabout, and analyses $\LLL_D(\alpha)$ in terms of the dimension spectrum for pointwise dimensions of a measure of maximal entropy, by showing that for such a measure the level sets of the pointwise dimension coincide with the level sets of the Lyapunov exponent (this may also be shown using the fact that the local entropy of such a measure is constant everywhere and applying Lemma~\ref{lem:well-behaved} below), and then applying results from~\cite{PW97}.

In contrast, the proof of Theorem~\ref{thm:blackbox} does not involve any other spectra, and together with~\eqref{eqn:LDviaLE}, this gives a more direct proof of the following well-known result.
\begin{proposition}
Let $f\colon V\to M$ be as in Theorem~\ref{thm:ruelle}, and let $J$ be a uniformly expanding repeller.  Then the Lyapunov spectra of $f$ are given in terms of the Legendre transform of the pressure function as follows:
\begin{equation}\label{eqn:legendre}
\begin{aligned}
\LLL_E(\alpha) &= \inf_{t\in\RR} ( P_J(-t\log a) - \alpha t ), \\
\LLL_D(\alpha) &= \frac 1\alpha \inf_{t\in\RR} ( P_J(-t\log a) - \alpha t ).
\end{aligned}
\end{equation}
\end{proposition}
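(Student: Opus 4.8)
The plan is to derive the proposition from Theorem~\ref{thm:blackbox} together with the identity~\eqref{eqn:LDviaLE}; the only work is to check that the hypotheses of Theorem~\ref{thm:blackbox} hold with $t_1 = -\infty$ and $t_2 = +\infty$. Since $f$ is $C^{1+\eps}$ and conformal, it is conformal in our sense with factor $a(x) = \|Df(x)\|$, and uniform expansion (condition (4) of Theorem~\ref{thm:ruelle}) gives $a(x) \geq r > 1$ for all $x \in J$, so there are no critical points and no singularities; in particular $\BBB = J$.

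Next I would verify conditions (1) and (2) of Theorem~\ref{thm:blackbox}. For (1): uniform expansion together with conformality forces $f$ to expand distances on some uniform local scale on the compact set $J$, so $f|_J$ is positively expansive, and hence an equilibrium state for $-t\log a$ exists for every $t \in \RR$. For (2): I would invoke the classical thermodynamic formalism for $C^{1+\eps}$ topologically mixing expanding repellers. Since $Df$ is $\eps$-Hölder and bounded away from $0$ and $\infty$, the function $\log a$ is Hölder continuous on $J$, and a Markov partition together with the Ruelle transfer operator shows that $T \colon t \mapsto P_J(-t\log a)$ is real analytic on all of $\RR$; in particular $T$ is differentiable at every $t$, so both hypotheses hold on $(t_1,t_2) = (-\infty,+\infty)$.

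Theorem~\ref{thm:blackbox} now yields, for every $\alpha$ in the interval $(\alpha_1,\alpha_2)$ with $\alpha_1 = -\lim_{t\to+\infty} T'(t)$ and $\alpha_2 = -\lim_{t\to-\infty} T'(t)$, the first line of~\eqref{eqn:legendre}, namely $\LLL_E(\alpha) = \inf_{t\in\RR}(P_J(-t\log a) - \alpha t)$. Every such $\alpha$ is positive, since $\lambda(x) \geq \llambda(x) \geq \log r > 0$ for all $x \in J$, so the identity~\eqref{eqn:LDviaLE}, which is valid for all $0 < \alpha < \infty$, gives $\LLL_D(\alpha) = \frac 1\alpha \LLL_E(\alpha)$ and hence the second line of~\eqref{eqn:legendre}. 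If $\log a$ is cohomologous to a constant then $T$ is affine, $(\alpha_1,\alpha_2)$ is empty, and $\AAA(\alpha)$ is either empty or all of $J$ for the one relevant value of $\alpha$, where the formula reduces to Bowen's equation; otherwise $T$ is strictly convex, $(\alpha_1,\alpha_2)$ is the interior of the range of Lyapunov exponents on $J$, and~\eqref{eqn:legendre} holds for every $\alpha$ with $\AAA(\alpha) \neq \emptyset$.

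The substance of the argument is entirely classical — the analyticity of the pressure function of a Hölder potential over a mixing expanding repeller, plus the fact that such a map is expansive — so the real work has already been done in Theorem~\ref{thm:blackbox}, Theorem~\ref{thm:main}, and the standard literature. I expect the only genuine care needed is the bookkeeping in the last paragraph: pinning down the precise range of $\alpha$ on which~\eqref{eqn:legendre} is asserted and disposing of the degenerate cohomologous-to-constant case, so that the statement is literally correct rather than merely correct on an interval.
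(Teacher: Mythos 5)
Your proposal follows exactly the route the paper intends: the proposition is presented in the text as an immediate consequence of Theorem~\ref{thm:blackbox} together with the identity~\eqref{eqn:LDviaLE}, with the analyticity and strict convexity of $t\mapsto P_J(-t\log a)$ cited as ``well-known'' facts from classical thermodynamic formalism for H\"older potentials on mixing expanding repellers. You have simply spelled out the verification of the hypotheses of Theorem~\ref{thm:blackbox} (expansivity for the existence of equilibrium states, analyticity of the pressure function via the Ruelle transfer operator) and the positivity of $\alpha$ needed for~\eqref{eqn:LDviaLE}, which the paper leaves implicit.
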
 

In particular, if $\log a$ is not cohomologous to a constant, then the spectrum $\LLL_E$ is strictly concave, and both spectra are real analytic (this follows from analyticity of the pressure function and standard properties of the Legendre transform).
\end{example}

\begin{example}
Let $f\colon \overline{\CC} \to \overline{\CC}$ be a parabolic rational map of the Riemann sphere; that is, a rational map such that the Julia set $J$ contains at least one indifferent fixed point (that is, a fixed point $z_0$ for which $|f'(z_0)|=1$), but does not contain any critical points.  Then the map $f\colon J\to J$ satisfies the hypotheses of Theorem~\ref{thm:main}, and so~\eqref{eqn:LDviaLE} gives $\LLL_D$ in terms of $\LLL_E$.

\begin{figure}[tbp]
	\includegraphics{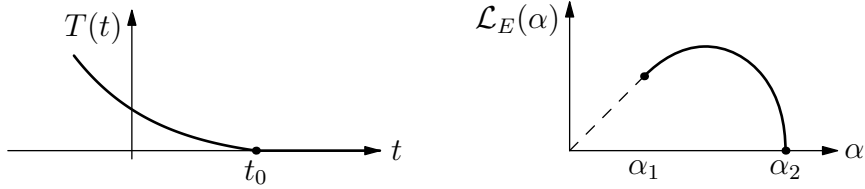}
	\caption{The pressure function and Lyapunov spectrum for a parabolic rational map.}
	\label{fig:graphs}
\end{figure}

Following Makarov and Smirnov~\cite{MS00}, we say that $f$ is \emph{exceptional} if there is a finite, non-empty set $\Sigma \subset \overline{\CC}$ such that $f^{-1}(\Sigma) \setminus \Crit f = \Sigma$, where $\Crit f$ is the set of critical points of $f$.  Combining the results in~\cite{MS00} with~\cite[Corollary D.1 and Theorem G]{hH08}, we see that if $f$ is non-exceptional, then the graph of the function $T$ is as shown in Figure~\ref{fig:graphs}.  In particular, $T$ is analytic and strictly convex on $(-\infty, t_0)$, where $t_0 = \dim_H J(f)$, and so writing
\[
\alpha_1 = -\lim_{t \to t_0^-} T'(t), \qquad \alpha_2 = -\lim_{t \to -\infty} T'(t),
\]
it follows from Theorem~\ref{thm:blackbox} and~\eqref{eqn:LDviaLE} that the Lyapunov spectra $\LLL_E$ and $\LLL_D$ are given by~\eqref{eqn:legendre} on $(\alpha_1, \alpha_2)$.

This result is obtained by other methods in~\cite{GPR09}, where it is also shown that the spectrum $\LLL_E$ is linear on $[0,\alpha_1]$ (the dotted line in Figure~\ref{fig:graphs}), with $\LLL_E(\alpha) = \alpha$.  Assuming this result,~\eqref{eqn:LDviaLE} shows that $\LLL_D(\alpha) =  1$ for $0<\alpha\leq\alpha_1$, a fact which must be proved separately in~\cite{GPR09}.
\end{example}

\section{An application to symbolic dynamics}

We now describe a class of systems to which these results may be applied, for which the phase space is not a manifold.  Fix an integer $k\geq 2$, and let $X = \Sigma_k^+$ be the full one-sided shift on $k$ symbols.  Given $x,y\in X$, let $x\wedge y$ denote the common prefix of $x$ and $y$---that is, if $n$ is the unique integer such that $x_i = y_i$ for all $1\leq i\leq n$, but $x_{n+1} \neq y_{n+1}$, then
\[
x\wedge y = x_1 \dots x_n = y_1 \dots y_n.
\]
Let $\psi\colon \bigcup_{n\geq 0} \{1,\dots, k\}^n \to \RR^+$ be a function defined on the space of all finite words on the alphabet $\{1,\dots,k\}$, and suppose that $\psi$ is such that for every $x\in X$, the sequence $\{\psi(x_1\dots x_n)\}$ is non-increasing and approaches $0$ as $n\to\infty$.  Then $d(x,y) = \psi(x\wedge y)$ defines a metric on $X=\Sigma_k^+$; to prove this, one needs only verify the triangle inequality, or equivalently, show that
\[
\psi(x\wedge z) \leq \psi(x\wedge y) + \psi(y\wedge z)
\]
for every $x,y,z\in X$.  This follows from the observation that if $n$ is the length of the common prefix of $x$ and $z$, then either $x_{n+1} \neq y_{n+1}$ or $y_{n+1} \neq z_{n+1}$: without loss of generality, suppose the first holds, and then we have
\[
\psi(x\wedge y) = \psi(x_1\dots x_m) \geq \psi(x_1\dots x_n) = \psi(x\wedge z)
\]
for some $0\leq m\leq n$.  Thus $d$ is a metric, and the requirement that $\psi(x_1\dots x_n) \to 0$ guarantees that $d$ induces the product topology on $X = \{1,\dots,k\}^\NN$.

In order for the shift $\sigma$ to be conformal, we require the following limit to exist for every $x\in X$:
\begin{equation}\label{eqn:axpsi}
a(x) = \lim_{n\to\infty} \frac{\psi(x_2\dots x_n)}{\psi(x_1\dots x_n)}.
\end{equation}
Furthermore, we demand that $a(x)$ depend continuously on $x$.  If these conditions are satisfied, then the shift $\sigma$ is conformal with factor $a(x)$ given by~\eqref{eqn:axpsi}:  indeed, given any $x,y \in X$ such that the length of the common prefix is $n$, we have
\[
\frac{d(\sigma(x),\sigma(y))}{d(x,y)} = 
\frac{\psi(\sigma(x) \wedge \sigma(y))}{\psi(x\wedge y)} =
\frac{\psi(x_2\dots x_n)}{\psi(x_1\dots x_n)},
\]
and since $y\to x$ if and only if $n\to \infty$, this gives
\[
\lim_{y\to x} \frac{d(\sigma(x),\sigma(y))}{d(x,y)} =
\lim_{n\to\infty} \frac{\psi(x_2\dots x_n)}{\psi(x_1\dots x_n)} = a(x).
\]

Given the above conditions, we may apply Theorem~\ref{thm:main} to subsets $Z\subset X$ on which the lower Lyapunov exponents are positive and we have tempered contraction.  We now describe some simple candidates for the function $\psi$, for which the results take on a straightforward form (and for which tempered contraction is automatic).

\begin{example}
Fix $\theta > 1$ and let $\psi(x_1\dots x_n) = \theta^{-n}$, so that $d(x,y) = \theta^{-n}$, where $n+1$ is the first entry in which $x$ and $y$ differ.  Then $a(x) = \theta$ for every $x\in X$, and it follows from~\eqref{eqn:Bowen3} that for every $Z\subset X$, we have
\[
\dim_H Z = \frac{\htop Z}{\log \theta}.
\]
\end{example}

\begin{example}
Fix $\theta_1, \theta_2, \dots, \theta_k \geq 1$ and define $\psi$ by
\[
\psi(x_1\dots x_n) = \frac 1n (\theta_{x_1} \theta_{x_2} \cdots \theta_{x_n})^{-1}.
\]
(The factor of $\frac 1n$ is necessary to ensure that $\psi(x_1\dots x_n) \to 0$ even if all but finitely many of the $\theta_{x_i}$ are equal to $1$; it can be omitted if $\theta_j > 1$ for all $j$.)  Then $a(x) = \theta_{x_1}$ is continuous, and $a(x)\geq 1$ for all $x$, so $\BBB = X$.

It follows that~\eqref{eqn:dimhz} gives the Hausdorff dimension of any set $Z\subset X$ on which the lower Lyapunov exponents are positive (in this case, the lower Lyapunov exponent of $x$ is determined solely by the asymptotic frequency of the various symbols $1,\dots,k$ in the expansion of $x$).

If we consider the level sets of Lyapunov exponents, then we see that the entropy spectrum for Lyapunov exponents and the dimension spectrum for Lyapunov exponents are once again related by~\eqref{eqn:LDviaLE}.
\end{example}

\section{Preparatory results}

We proceed now to the proofs, beginning with two propositions allowing us to use the definitions in a form that will make later computations more convenient.

\begin{proposition}\label{prop:dimsequal}
Let $X$ be a separable metric space, fix $Z\subset X$, and let $\dim_H Z$, $\dim_H^b Z$, and $\dim_H^{b'} Z$ be as in Definition~\ref{def:dimh}.  Then all three quantities are equal.
\end{proposition}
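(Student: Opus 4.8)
I would prove the circular chain of inequalities
\[
\dim_H^{b'} Z \;\le\; \dim_H Z \;\le\; \dim_H^b Z \;\le\; \dim_H^{b'} Z ,
\]
working throughout at the level of the set functions $m_H,m_H^b,m_H^{b'}$, and using the trivial observation that if $m(Z,s)\le C^s\,m'(Z,s)$ for all $s>0$ with $C\ge 1$ fixed, then $m'(Z,s)=0$ forces $m(Z,s)=0$, so the critical value of $m$ is $\le$ that of $m'$; thus multiplicative constants $2^s,4^s,\dots$ never matter. At the outset I would note where separability is used: a countable $\tfrac\eps2$-dense subset of $Z$ gives a countable cover of $Z$ by open balls of radius $\eps$, so $\DDD(Z,\eps)$ and $\DDD^b(Z,\eps)$ are non-empty for every $\eps>0$, and one may assume $Z\neq\emptyset$.

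The two middle inequalities are immediate. For $\dim_H Z\le\dim_H^b Z$: any $\{(x_i,r_i)\}\in\DDD^b(Z,\eps)$ yields the countable open cover $\{B(x_i,r_i)\}$ of $Z$ with $\diam B(x_i,r_i)\le 2r_i\le 2\eps$, hence an element of $\DDD(Z,2\eps)$ carrying the \emph{same} sum $\sum_i(\diam B(x_i,r_i))^s$; so $m_H(Z,s,2\eps)\le m_H^b(Z,s,\eps)$, and $\eps\to 0$ gives $m_H(Z,s)\le m_H^b(Z,s)$. For $\dim_H^b Z\le\dim_H^{b'} Z$: since $\diam B(x,r)\le 2r$ always, the $m_H^b$-sum of a given ball cover is bounded by its $m_H^{b'}$-sum, so $m_H^b(Z,s,\eps)\le m_H^{b'}(Z,s,\eps)$ for each $\eps$, and hence in the limit.

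The one inequality that requires an actual construction is $\dim_H^{b'} Z\le\dim_H Z$. Given $\{U_i\}\in\DDD(Z,\eps)$, first discard every $U_i$ with $U_i\cap Z=\emptyset$ (the rest still cover $Z$) and pick $x_i\in U_i\cap Z$ for each remaining $i$. If $\diam U_i>0$, every $y\in U_i$ has $d(x_i,y)\le\diam U_i<2\diam U_i$, so $U_i\subseteq B(x_i,r_i)$ with $r_i:=2\diam U_i\le 2\eps$, and $(2r_i)^s=4^s(\diam U_i)^s$. If $\diam U_i=0$ then $U_i=\{x_i\}$ is an isolated point, and I cover it by $B(x_i,r_i)$ with $r_i:=\min(\eps,\eta_i)$, where the $\eta_i>0$ are chosen so that $\sum_i(2\eta_i)^s<\eta$. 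The resulting $\{(x_i,r_i)\}$ lies in $\DDD^b(Z,2\eps)$ and has $m_H^{b'}$-sum at most $4^s\sum_i(\diam U_i)^s+\eta$; taking the infimum over $\DDD(Z,\eps)$, then $\eta\to 0$, then $\eps\to 0$, gives $m_H^{b'}(Z,s)\le 4^s\,m_H(Z,s)$. This closes the chain, so all three critical values coincide, proving the proposition.

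The only genuinely delicate points are bookkeeping: recording explicitly that the constants $2^s$ and $4^s$ do not move the critical value, and correctly handling the degenerate case in which some $U_i$ is a single isolated point — such a set contributes $0$ to every $m_H$- and $m_H^b$-sum but a positive amount to the $m_H^{b'}$-sum, which is absorbed by the summable-$\eta_i$ device above. Everything else is a routine chase through Definition~\ref{def:dimh}.
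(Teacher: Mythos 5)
Your proof is correct, and it takes a genuinely different route from the paper's. The paper proves the two equalities $\dim_H Z = \dim_H^b Z$ and $\dim_H^b Z = \dim_H^{b'} Z$ separately, each by a two-sided comparison of the set functions at the $\eps$-level; the substantive step in the second comparison uses separability to discard the at-most-countably-many isolated points of $X$, and then for each $(x_i,r_i)$ replaces $r_i$ by the ``effective radius'' $t_i = \sup\{\,d(x_i,y) : y\in B(x_i,r_i)\,\}$ and argues $\diam B(x_i,t_i) \geq t_i$. You instead close a cyclic chain $\dim_H^{b'} Z \le \dim_H Z \le \dim_H^b Z \le \dim_H^{b'} Z$, so that after the two immediate comparisons $m_H(Z,s,2\eps)\le m_H^b(Z,s,\eps)$ and $m_H^b(Z,s,\eps)\le m_H^{b'}(Z,s,\eps)$ only one substantive inequality, $m_H^{b'}(Z,s)\le 4^s\,m_H(Z,s)$, remains. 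Your proof of it --- inflate each $U_i$ with $\diam U_i>0$ to the ball $B(x_i,2\diam U_i)$, and absorb the zero-diameter $U_i$ (singletons, which contribute nothing to the Hausdorff sum but would contribute positively to the $m_H^{b'}$-sum) by a summable tail $\sum_i(2\eta_i)^s<\eta$ --- avoids the $t_i$ construction entirely and handles the isolated-point degeneracy more explicitly than the paper does. The multiplicative constant degrades from $2^s$ to $4^s$, which is immaterial for the critical exponent, as you note. Your use of separability is also lighter than the paper's: you only invoke it to ensure the cover collections $\DDD(Z,\eps)$ and $\DDD^b(Z,\eps)$ are non-empty, whereas the paper relies on it to reduce to the no-isolated-points case.
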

\begin{proof}
To see that $\dim_H Z = \dim_H^b Z$, it suffices to show that
\[
2^{-s} m_H^b(Z,s,\eps) \leq m_H(Z,s,\eps) \leq m_H^b(Z,s,\eps/2).
\]
The first inequality follows by associating to every $\eps$-cover $\{U_i\}\in \DDD(Z,\eps)$ the set $\{(x_i,r_i)\}\in \DDD^b(Z,\eps)$, where $x_i\in U_i$ is arbitrary and $r_i = \diam U_i$.  The second inequality follows by associating to every $\{(x_i,r_i)\} \in \DDD^b(Z,\eps/2)$ the $\eps$-cover $\{B(x_i,r_i)\}$.  (This half of the proposition may be found in any standard reference on Hausdorff dimension.)

To show that $\dim_H^b Z = \dim_H^{b'} Z$, we show that
\begin{equation}\label{eqn:mHbb'}
m_H^b(Z,s,\eps) \leq m_H^{b'}(Z,s,\eps) \leq 2^s m_H^b(Z,s,\eps).
\end{equation}
The first inequality is immediate since $\diam B(x,r) \leq 2r$ for every $x\in X$ and $r>0$.  For the second inequality, we observe that by separability, there are at most countably many isolated points in $X$, and that removing a countable number of isolated points does not affect the value of $m_H^{b'}(Z,s,\eps)$ or $m_H^b(Z,s,\eps)$; thus we may assume without loss of generality that $X$ has no isolated points.  Given $(x_i,r_i)$, let $t_i$ be given by
\[
t_i = \sup\{ t\in [0,r_i] \mid d(x_i,y)=t \text{ for some } y\in B(x_i,r_i) \};
\]
because $x_i$ is not isolated, we have $0<t_i \leq r_i$.  Furthermore, we have
\[
\diam B(x_i,t_i) \geq d(x_i,y) = t_i,
\]
and so
\[
\sum_i (\diam B(x_i,t_i))^s \geq \sum_i t_i^s = 2^{-s} \sum_i (2t_i)^s.
\]
Taking the infimum over all $\{(x_i,r_i)\}\in \DDD^b(Z,\eps)$ gives the second inequality in~\eqref{eqn:mHbb'}, and we are done.
\end{proof}

\begin{proposition}\label{prop:defsequal}
For continuous $f$ and $\ph$, the definition of pressure given here is equivalent to the definition given in~\cite{yP98}.
\end{proposition}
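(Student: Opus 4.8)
The plan is to compare the two Carath\'eodory constructions scale by scale. In the formulation of \cite{yP98} one fixes a finite open cover $\UUU$ of $X$, assigns to a finite string $\UU=(U_{i_0},\dots,U_{i_{m-1}})$ of members of $\UUU$ the set $X(\UU)=\{x\in X : f^j(x)\in U_{i_j}\text{ for }0\le j\le m-1\}$, the length $m(\UU)=m$, and the weight $\exp(-s\,m(\UU)+\sup_{X(\UU)}S_m\ph)$, covers $Z$ by countably many such sets of length $\ge N$, takes the infimum of the total weight, lets $N\to\infty$, extracts the critical value $P_Z(\ph,\UUU)$, and finally lets $\diam\UUU\to 0$. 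Relative to the definition used here there are three discrepancies: strings from an open cover versus Bowen balls centred in $Z$; the supremum $\sup_{X(\UU)}S_m\ph$ versus the centre value $S_{n_i}\ph(x_i)$; and a harmless off-by-one in the length (our $B(x,n,\delta)$ ranges over $0\le k\le n$, a length-$m$ string over $0\le k\le m-1$). I would write $\widehat P_Z(\ph)$ for the pressure of \cite{yP98} and $P_Z(\ph)$ for the one defined here, and prove the two inequalities separately.

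The first step is a uniform-continuity lemma that neutralises the last two discrepancies. Since $X$ is compact and $f,\ph$ continuous, the modulus $\omega(\gamma)=\sup\{|\ph(y)-\ph(z)| : d(y,z)\le\gamma\}$ tends to $0$ as $\gamma\to 0$, and if the first $m$ iterates of $y$ and $x$ stay within $\gamma$ of one another then $|S_m\ph(y)-S_m\ph(x)|\le m\,\omega(\gamma)$, while a shift of the length by one changes $S_\bullet\ph$ by at most $\|\ph\|_\infty$. The point is that an additive error $m\,\omega(\gamma)+O(1)$ in the exponent multiplies the set function by a bounded factor (from the $O(1)$ and, near the critical value, from a bounded range of $e^{\pm s}$) and, absorbing $m\,\omega(\gamma)$ into the variable $s$, shifts the critical value by at most $\omega(\gamma)$; as $\gamma$ will be tied to the scale, this contribution vanishes in the limit. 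I would record this once and reuse it in both directions.

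The second step is the two one-sided scale comparisons. (I) Given a finite open cover $\UUU$ of diameter $\gamma$, every nonempty $X(\UU)$ of length $m$ lies in $B(z,m-1,\gamma)$ for any $z\in X(\UU)$; discarding strings disjoint from $Z$ and taking $z\in X(\UU)\cap Z$ turns a string cover of $Z$ into a cover by Bowen balls of scale $\gamma$ centred in $Z$, and since $\sup_{X(\UU)}S_m\ph\ge S_m\ph(z)$ the new total weight is no larger than the old up to a bounded factor; letting $N\to\infty$ and then $\diam\UUU\to 0$ gives $P_Z(\ph)\le\widehat P_Z(\ph)$. (II) Given $\delta>0$, fix a finite cover $\UUU$ by balls of radius $\le\delta$ with Lebesgue number $\eta>0$; then for any $\delta'<\eta/2$ and any $x\in Z$, the Bowen ball $B(x,n,\delta')$ is contained in the length-$(n+1)$ string $X(U_0,\dots,U_n)$, where $U_k$ is chosen to contain $B(f^k x,\delta')$, and every point of this string has its first $n+1$ iterates within $\diam\UUU\le 2\delta$ of those of $x$; the lemma then bounds the new weight by the old up to a bounded factor and a shift $\omega(2\delta)$ of $s$, and letting $N\to\infty$, then $\delta'=\delta'(\delta)\to 0$ (one may take $\delta'=\min(\delta,\eta/3)$), then $\delta\to 0$, gives $\widehat P_Z(\ph)\le P_Z(\ph)$. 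Together with (I) this yields equality. (If the variant of \cite{yP98} one wishes to match is instead phrased directly with Bowen balls and suprema over them, only steps (I)--(II) are needed, with the lemma handling the supremum.)

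The main obstacle is simply the bookkeeping of scales: one must keep $\delta$, the auxiliary $\delta'$, the cover diameter $\gamma=\diam\UUU$ and the Lebesgue number $\eta$ in the correct order, check that every error introduced is either a bounded multiplicative factor or of the form $m\cdot o(1)$, and observe that near the critical value $s$ ranges over a bounded interval so that the factors coming from the off-by-one are genuinely bounded; one should also confirm that the limits defining $P_Z(\ph)$ and $\widehat P_Z(\ph)$ are taken along cofinal families of scales, so that the inequalities for individual scales pass to the limit. None of this is deep once the uniform-continuity lemma is isolated, but it is easy to reverse a quantifier, so I would state the lemma first and reduce the whole comparison to the two short inclusions above.
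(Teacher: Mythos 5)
Your proof follows essentially the same route as the paper: both directions rest on the same pair of inclusions (a Bowen ball of radius at most the Lebesgue number of $\UUU$ sits inside a string $X(\UU)$, and conversely a nonempty string $X(\UU)$ with $\abs{\UUU}<\delta$ sits inside a Bowen ball centred at any of its points in $Z$), with uniform continuity of $\ph$ absorbing the discrepancy between $\sup_{X(\UU)}S_m\ph$ and $S_m\ph(x_i)$ into a shift of $s$. The only point where you are slightly more careful than the paper is the explicit bookkeeping of the off-by-one between the $n+1$ constraints in $B(x,n,\delta)$ and the length of the corresponding string, which the paper silently elides; this does not change the limit.
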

\begin{proof}
In~\cite{yP98}, Pesin defines topological pressure as follows.  Given a compact metric space $X$, a continuous map $f\colon X\to X$, and a continuous function $\ph\colon X\to \RR$, we fix a finite open cover $\UUU$ of $X$, and let $\SSS_m(\UUU)$ denote the set of all strings $\UU = \{U_{w_1} \dots U_{w_m} \mid U_{w_j} \in \UUU\}$ of length $m=m(\UU)$.  We write $\SSS = \SSS(\UUU) = \bigcup_{m\geq 0} \SSS_m(\UUU)$.

Now to each string $\UU \in \SSS(\UUU)$ we associate the set
\[
X(\UU) = \{x\in X \mid f^{j-1}(x) \in U_{w_j} \text{ for all } j=1,\dots, m(\UU) \};
\]
given $Z\subset X$ and $N\in\NN$, we let $\SSS(Z,\UUU,N)$ denote the set of all finite or countable collections $\GGG$ of strings of length at least $N$ which cover $Z$; that is, $\GGG\subset \SSS(\UUU)$ is in $\SSS(Z,\UUU,N)$ if and only if
\begin{enumerate}
\item $m(\UU)\geq N$ for all $\UU\in \GGG$, and also
\item $\bigcup_{\UU\in\GGG} X(\UU) \supset Z$.
\end{enumerate}
Then we define a set function by
\begin{multline}\label{eqn:mprime}
m_P'(Z,\ph,\UUU,s,N) \\ = \inf_{\SSS(Z,\UUU,N)} \left\{ 
\sum_{\UU\in\GGG} \exp \left(-s m(\UU) + \sup_{x\in X(\UU)} S_{m(\UU)} \ph(x) \right) \right \}
\end{multline}
and the critical value of $m_P'(Z,\ph,\UUU,s) = \lim_{N\to\infty} m_P'(Z,\ph,\UUU,s,N)$ by
\[
P'_Z(\ph,\UUU) = \inf\{s \mid m_P'(Z,\ph,\UUU,s) = 0 \} = \sup\{ s \mid m_P'(Z,\ph,\UUU,s)=\infty \}.
\]
(We write $m_P'$ and $P'$ to distinguish these from our definitions given earlier.)  The topological pressure is $P'_Z(\ph) = \lim_{\abs{\UUU}\to 0} P'_Z(\ph,\UUU)$, where $\abs{\UUU} = \max\{\diam U_i \mid U_i \in \UUU\}$ is the diameter of the cover $\UUU$.

Given $\delta>0$, let
\[
\eps(\delta) = \sup\{ |\ph(x) - \ph(y)| \mid d(x,y)<\delta \},
\]
and observe that since $\ph$ is continuous and $X$ is compact, $\ph$ is in fact uniformly continuous, hence $\eps(\delta)$ is finite, and  $\lim_{\delta\to 0} \eps(\delta)=0$.  Furthermore, given $x\in X$, $y\in B(x,n,\delta)$, we have
\[
|S_n\ph(x) - S_n\ph(y)| < n\eps(\delta).
\]

Now for a fixed $\delta>0$, we choose a cover $\UUU$ with $\abs{\UUU}<\eps(\delta)$.  Let $\gamma(\UUU)$ be the Lebesgue number of $\UUU$, and consider $\{(x_i,n_i)\} \in \PPP(Z,N,\gamma(\UUU))$.  Then for each $(x_i,n_i)$ there exists $\UU_i \in \SSS_{n_i}(\UUU)$ such that $B(x_i,n_i,\gamma(\UUU)) \subset X(\UU_i)$; let $\GGG' = \{ \UU_i \}$, and then
\begin{align*}
m_P'(Z,\ph,\UUU,s,N) &= \inf_{\SSS(Z,N,\delta)} \sum_{\UU\in\GGG} \exp\left(-s m(\UU) + \sup_{x\in X(\UU)} S_{m(\UU)} \ph(x)\right) \\
&\leq \sum_{\UU_i \in\GGG'} \exp\left(-s m(\UU_i ) + \sup_{x\in X(\UU_i)} S_{m(\UU_i )}\ph(x)\right) \\
&\leq \sum_{(x_i,n_i)} \exp\left(-n_i (s-\eps(\delta)) + S_{n_i}\ph(x_i) \right).
\end{align*}
Since the collection $\{(x_i,n_i)\}$ was arbitrary, we have
\[
m_P'(Z,\ph,\UUU,s,N) \leq m_P(Z,s-\eps(\delta),\ph,N,\gamma(\UUU)).
\]
Taking the limit $N\to\infty$ yields
\[
P'_Z(\ph,\UUU) \leq P_Z(\ph,\gamma(\UUU)) - \eps(\delta),
\]
and as $\delta\to 0$ we obtain
\[
P'_Z(\ph) \leq P_Z(\ph).
\]

For the other inequality, fix a cover $\UUU$ of $X$, with $\abs{\UUU}<\delta$.  Given $\GGG\in \SSS(Z,\UUU,N)$, we may assume without loss of generality that for every $\UU\in\GGG$, we have $X(\UU)\cap Z \neq \emptyset$ (otherwise we may eliminate some sets from $\GGG$, which does not increase the sum in~\eqref{eqn:mprime}).  Thus for each such $\UU$, we choose $x_\UU \in X(\UU) \cap Z$; we see that $X(\UU) \subset B(x_\UU,m(\UU),\delta)$, and so
\begin{align*}
m_P'(Z,\ph,\UUU,s,N) &= \inf_{\SSS(Z,\UUU,N)} \sum_{\UU\in\GGG} \exp\left(-s m(\UU) + \sup_{x\in X(\UU)} S_{m(\UU)} \ph(x)\right) \\
&\geq \inf_{\PPP(Z,N,\delta)} \sum_{(x_i,n_i)} \exp\left(-n_i s + S_{n_i}\ph(x_i)\right) \\
&= m_P(Z,s,\ph,N,\delta).
\end{align*}
Thus $P'_Z(\ph,\UUU)\geq P_Z(\ph,\delta)$, and taking the limit as $\delta\to 0$ gives
\[
P'_Z(\ph) \geq P_Z(\ph),
\]
which completes the proof.
\end{proof}

\begin{proposition}\label{prop:decreasing}
Given $f\colon X\to X$, $\ph\colon X\to \RR$, and $Z\subset X$, suppose there exist $\alpha, \beta \in \RR$ such that
\[
\alpha \leq \llim_{n\to\infty} \frac 1n S_n\ph(x) \leq \ulim_{n\to\infty} \frac 1n S_n\ph(x) \leq \beta
\]
for every $x\in Z$, and write $\gamma(t) = P_Z(t \ph)$.  Then the graph of $\gamma$ lies between the lines of slope $\alpha$ and $\beta$ through any point $(t,\gamma(t))\subset \RR^2$; that is,
\begin{equation}\label{eqn:cone}
\gamma(t) + \alpha h \leq \gamma(t+h) \leq \gamma(t) + \beta h
\end{equation}
for all $t\in \RR$, $h>0$.
\end{proposition}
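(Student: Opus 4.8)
\emph{Proof plan.} The plan is to compare the set functions $m_P(\cdot,s,(t+h)\ph,N,\delta)$ and $m_P(\cdot,s',t\ph,N,\delta)$ directly, using the elementary identity $S_n((t+h)\ph)(x)=S_n(t\ph)(x)+hS_n\ph(x)$ to trade a change in the potential for a shift in the parameter $s$. If along a Bowen ball $B(x_i,n_i,\delta)$ one had the clean estimate $\alpha n_i\le S_{n_i}\ph(x_i)\le\beta n_i$, then the exponent $-n_i s+S_{n_i}((t+h)\ph)(x_i)$ would lie between $-n_i(s-h\alpha)+S_{n_i}(t\ph)(x_i)$ and $-n_i(s-h\beta)+S_{n_i}(t\ph)(x_i)$, and summing and reading off critical values would give~\eqref{eqn:cone} at once. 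The one genuine obstacle is that the hypothesis only controls $\llim_n\frac1n S_n\ph(x)$ and $\ulim_n\frac1n S_n\ph(x)$: for each $x$ there is an order $n$ past which $\frac1n S_n\ph(x)$ is trapped near $[\alpha,\beta]$, but this threshold depends on $x$, so the clean estimate is not available uniformly over the centres of an arbitrary cover. I would deal with this by decomposing $Z$ into countably many pieces on which a one-sided estimate holds uniformly and then reassembling via countable stability of $P_Z$. It suffices to prove the right-hand inequality in~\eqref{eqn:cone}: the left-hand one follows by applying it to $-\ph$, whose Birkhoff averages satisfy $-\beta\le\llim_n\frac1n S_n(-\ph)\le\ulim_n\frac1n S_n(-\ph)\le-\alpha$, and relabelling.

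As preparation I would record the countable stability of the pressure: if $Z=\bigcup_M Z_M$ then $P_Z(\psi)=\sup_M P_{Z_M}(\psi)$ for any continuous $\psi$. The inequality $\ge$ is just monotonicity of $P$ in the set; for $\le$ one notes that a union of covers of the individual $Z_M$ (each with centres in the respective $Z_M\subset Z$) is again an admissible cover of $Z$, so $m_P(Z,s,\psi,N,\delta)\le\sum_M m_P(Z_M,s,\psi,N,\delta)$, and letting $N\to\infty$ and then $\delta\to0$ shows that $s>\sup_M P_{Z_M}(\psi)$ forces $m_P(Z,s,\psi,\delta)=0$ for every $\delta$, hence $s\ge P_Z(\psi)$. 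This is a routine property of the Carath\'eodory-type construction, which I would state without dwelling on it.

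The main argument then runs as follows. Fix $t\in\RR$, $h>0$, $\eps>0$, and put $Z_M=\{x\in Z\mid S_n\ph(x)\le n(\beta+\eps)\text{ for all }n\ge M\}$; since $\ulim_n\frac1n S_n\ph(x)\le\beta$ on $Z$, we get $Z=\bigcup_M Z_M$. For a fixed $M$ and any $N\ge M$, $\delta>0$, and cover $\{(x_i,n_i)\}\in\PPP(Z_M,N,\delta)$, every centre satisfies $S_{n_i}\ph(x_i)\le n_i(\beta+\eps)$ because $x_i\in Z_M$ and $n_i\ge N\ge M$; feeding this into the identity above and taking the infimum over $\PPP(Z_M,N,\delta)$ gives $m_P(Z_M,s,(t+h)\ph,N,\delta)\le m_P(Z_M,s-h(\beta+\eps),t\ph,N,\delta)$. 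Letting $N\to\infty$, reading off the critical value in $s$, letting $\delta\to0$, and using monotonicity of $P$ in the set then yields $P_{Z_M}((t+h)\ph)\le\gamma(t)+h(\beta+\eps)$ for every $M$; taking the supremum over $M$ via countable stability gives $\gamma(t+h)\le\gamma(t)+h(\beta+\eps)$, and $\eps\to0$ finishes it. The only delicate point throughout is the passage from the pointwise $\llim/\ulim$ hypothesis to a usable estimate, which is exactly what the decomposition $Z=\bigcup_M Z_M$ together with countable stability resolves; everything else is bookkeeping with the definitions of $m_P$ and $P_Z$.
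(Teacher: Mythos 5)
Your proof is correct and takes essentially the same approach as the paper: both decompose $Z$ into countably many pieces $Z_M$ on which a uniform Birkhoff-average bound holds for $n\geq M$, compare the set functions $m_P$ directly via the identity $S_n((t+h)\ph)=S_n(t\ph)+hS_n\ph$, and reassemble using countable stability of $P_Z$. The only (harmless) variations are that you prove the upper bound alone and deduce the lower bound by replacing $\ph$ with $-\ph$, whereas the paper proves the lower bound directly with a two-sided decomposition and remarks that the upper bound is analogous.
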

\begin{proof}
Let $\eps>0$ be arbitrary.  Given $m\geq 1$, let
\[
Z_m = \left\{x\in Z \,\Big|\, \frac 1n S_n\ph(x)\in (\alpha-\eps,\beta+\eps) \text{ for all } n\geq m \right\},
\]
and observe that $Z=\bigcup_{m=1}^\infty Z_m$.  Now fix $t\in \RR$, $h>0$, and $N\geq m$.  It follows from the definition of $Z_m$ that for any $\delta>0$ and $s \in \RR$ we have
\begin{align*}
m_P(Z_m, s, &(t+h)\ph, N, \delta) \\ 
&=\inf_{\PPP(Z_m,N,\delta)} \sum_{(x_i,n_i)} \exp(-n_i s + (t+h)S_{n_i} \ph (x_i)) \\
&\geq 
\inf_{\PPP(Z_m,N,\delta)} \sum_{(x_i,n_i)} \exp(-n_i s + tS_{n_i} \ph (x_i) + n_i h(\alpha - \eps)) \\
&= m_P(Z_m, s - h(\alpha-\eps), t\ph, N, \delta).
\end{align*}
Letting $N\to\infty$, this gives
\[
m_P(Z_m, s, (t+h)\ph, \delta) \geq m_P(Z_m, s -h(\alpha-\eps), t\ph, \delta);
\]
in particular, if the second quantity is equal to $\infty$, then the first is as well.  Letting $\delta\to 0$, it follows that
\[
P_{Z_m}((t+h)\ph) \geq P_{Z_m}(t\ph) + h(\alpha - \eps).
\]
Taking the supremum over all $m\geq 1$ and using the fact that topological pressure is countably stable---that is, that $P_Z(\ph) = \sup_m P_{Z_m}(\ph)$ (see \cite[Theorem 11.2(3)]{yP98})---we obtain
\[
\gamma(t+h) \geq \gamma(t) + h(\alpha-\eps);
\]
since $\eps>0$ was arbitrary, this establishes the first half of~\eqref{eqn:decreasing}.  The second half is proved similarly; an analogous computation shows that
\[
m_P(Z_m, s, (t+h)\ph, N, \delta) \leq  m_P(Z_m, s - h(\beta+\eps), t\ph, N, \delta),
\]
whence upon passing to the limits and taking the supremum, we have
\[
\gamma(t+h) \leq \gamma(t) + h\beta.\qedhere
\]
\end{proof}

\begin{corollary}\label{cor:uniqueroot}
Let $f\colon X\to X$ be as in Theorem~\ref{thm:main}.  Fix $0 < \alpha \leq \beta < \infty$ and $Z\subset \AAA([\alpha,\beta])$, and write $\gamma(t) = P_Z(-t\log a)$.  Then the following hold:
\begin{enumerate}
\item  $\gamma$ is Lipschitz continuous with Lipschitz constant $\beta$ and strictly decreasing with rate at least $\alpha$; that is, for every $t\in \RR$ and $h>0$ we have
\begin{equation}\label{eqn:decreasing}
\gamma(t) - \beta h \leq \gamma(t+h) \leq \gamma(t) - \alpha h.
\end{equation}
\item The equation~\eqref{eqn:Bowen2} has a unique root $t^*$; furthermore,
\[
\frac{\htop(Z)}{\beta} \leq t^* \leq \frac{\htop(Z)}{\alpha}.
\]
\item If $\alpha=\beta$, so that $Z\subset \AAA(\alpha)$, then the unique root of~\eqref{eqn:Bowen2} is $t^*=\htop(Z)/\alpha$.
\end{enumerate}
\end{corollary}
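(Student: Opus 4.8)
The plan is to obtain all three parts as consequences of Proposition~\ref{prop:decreasing} applied to the potential $\ph = -\log a$, so the first task is to translate the Lyapunov‑exponent hypotheses on $Z$ into the Birkhoff‑average hypotheses that proposition requires. For $\ph = -\log a$ we have $\frac1n S_n\ph(x) = -\lambda_n(x)$, hence $\llim_{n\to\infty}\frac1n S_n\ph(x) = -\ulambda(x)$ and $\ulim_{n\to\infty}\frac1n S_n\ph(x) = -\llambda(x)$. Since $Z\subset\AAA([\alpha,\beta])$ means precisely that $\alpha\le\llambda(x)\le\ulambda(x)\le\beta$ for every $x\in Z$, it follows that
\[
-\beta \;\le\; \llim_{n\to\infty}\frac1n S_n\ph(x) \;\le\; \ulim_{n\to\infty}\frac1n S_n\ph(x) \;\le\; -\alpha \qquad\text{for all }x\in Z.
\]
Thus Proposition~\ref{prop:decreasing} applies with $-\beta$ and $-\alpha$ playing the roles of its $\alpha$ and $\beta$; its function $t\mapsto P_Z(t\ph)$ is exactly our $\gamma(t) = P_Z(-t\log a)$, and the conclusion~\eqref{eqn:cone} becomes $\gamma(t) - \beta h \le \gamma(t+h)\le\gamma(t) - \alpha h$ for all $t\in\RR$, $h>0$. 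This is~\eqref{eqn:decreasing}, and it proves part~(1); in particular $\gamma$ is Lipschitz (hence continuous) and strictly decreasing.

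For part~(2), note that $\gamma(0) = P_Z(0) = \htop(Z)\ge0$ and that the upper estimate in part~(1) gives $\gamma(t)\le\htop(Z) - \alpha t\to-\infty$ as $t\to+\infty$. By continuity and the intermediate value theorem $\gamma$ has a root, and by strict monotonicity this root $t^*$ is unique (and $t^*\ge0$ because $\gamma(0)\ge0=\gamma(t^*)$). Evaluating both inequalities of part~(1) at $t=0$, $h=t^*$ and using $\gamma(t^*)=0$ yields $\htop(Z) - \beta t^* \le 0 \le \htop(Z) - \alpha t^*$, i.e.\ $\htop(Z)/\beta \le t^* \le \htop(Z)/\alpha$.

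Part~(3) is then immediate: when $\alpha=\beta$ the two inequalities in~\eqref{eqn:decreasing} collapse to the single identity $\gamma(t+h) = \gamma(t) - \alpha h$ for all $t$ and all $h>0$, so $\gamma$ is affine with $\gamma(t) = \gamma(0) - \alpha t = \htop(Z) - \alpha t$, whose unique zero is $t^* = \htop(Z)/\alpha$ (consistent with the now‑pinched bounds of part~(2)).

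The only step needing any care is the sign bookkeeping in the first paragraph — correctly passing from the hypotheses on the conformal factor $a$ to the hypotheses on $\ph = -\log a$, where the lower and upper Lyapunov exponents swap roles under negation; once that is in place the remainder is the routine continuity‑and‑monotonicity argument for the root of a strictly decreasing function. (If desired, one may also remark that $\htop(Z)$ is finite whenever $X$ has finite topological dimension, so that the bounds in part~(2) are finite; otherwise the statements hold with the obvious conventions.)
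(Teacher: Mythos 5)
Your proof is correct and follows essentially the same route as the paper: apply Proposition~\ref{prop:decreasing} with $\ph=-\log a$ (after carefully swapping $\llambda$ and $\ulambda$ under the sign change), then use monotonicity and the intermediate value theorem to get existence, uniqueness, and the bounds on $t^*$, with part~(3) following from the degenerate case $\alpha=\beta$. One small quibble: your closing parenthetical suggesting that finite topological dimension of $X$ guarantees $\htop(Z)<\infty$ is not accurate (continuous maps on finite-dimensional compacta can have infinite entropy); but this remark is not used in the argument, and the paper's own proof makes the same implicit finiteness assumption without comment.
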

\begin{proof}
(1) follows from Proposition~\ref{prop:decreasing} with $\ph=-\log a$.  (2) follows from the Intermediate Value Theorem by observing that the map $\tau\mapsto P_Z(-\tau\log a)$ is continuous and strictly decreasing, and that by~\eqref{eqn:decreasing} applied with $t=0$ and $h=\tau$, we have in the first place,
\[
P_Z(-\tau\log a) \geq P_Z(0) - \tau\beta = \htop(Z) - \tau\beta,
\]
so that $P_Z(-(\htop(Z)/\beta)\log a) \geq 0$, and in the second place,
\[
P_Z(-\tau\log a) \leq P_Z(0) - \tau\alpha = \htop(Z) - \tau\alpha,
\]
so that $P_Z(-(\htop(Z)/\alpha)\log a) \leq 0$.  Then (3) follows immediately.
\end{proof}

\begin{proposition}\label{prop:Lyap-tempered}
Let $f\colon X\to X$ be as in Theorem~\ref{thm:main}, and suppose that $\lambda(x)$ exists and is positive.  Then $x\in \BBB$.
\end{proposition}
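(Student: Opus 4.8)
The plan is to convert the tempered contraction condition into a statement about Birkhoff sums of $\log a$ along the orbit of $x$ by means of the telescoping identity
\[
S_{n-k}\log a(f^k(x)) = S_n(\log a)(x) - S_k(\log a)(x),
\]
valid for all $0\le k\le n$, and then to cash in the hypothesis that $\lambda_n(x)=\frac1n S_n(\log a)(x)$ converges to $\lambda := \lambda(x)>0$. First I would fix $\eps>0$ and choose an auxiliary constant $\delta$ with $0<\delta<\min(\lambda,\eps/2)$. By the definition of the limit there is $N=N(\eps)$ such that $(\lambda-\delta)n\le S_n(\log a)(x)\le(\lambda+\delta)n$ for all $n\ge N$, and since $0<a(f^j(x))<\infty$, the quantity $C := \max_{0\le j\le N}\abs{S_j(\log a)(x)}$ is a finite real number. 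These are the only inputs needed.

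Next I would bound $S_{n-k}\log a(f^k(x))+n\eps = S_n(\log a)(x)-S_k(\log a)(x)+n\eps$ from below, uniformly over all $0\le k\le n$, by a case analysis according to the positions of $k$ and $n$ relative to $N$. If $k\ge N$ (hence also $n\ge N$), then combining the lower bound on $S_n(\log a)(x)$ with the upper bound on $S_k(\log a)(x)$ gives $S_n(\log a)(x)-S_k(\log a)(x)+n\eps \ge \lambda(n-k)-\delta(n+k)+n\eps \ge n(\eps-2\delta)\ge 0$, using $\lambda(n-k)\ge 0$ and $n+k\le 2n$. If $k<N\le n$, then $S_n(\log a)(x)\ge(\lambda-\delta)n\ge 0$ while $-S_k(\log a)(x)\ge -C$, so the expression is $\ge -C$. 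If $k\le n<N$, then both Birkhoff sums are bounded in absolute value by $C$, so the expression is $\ge -2C$. In every case it is $\ge -2C$, so the infimum defining tempered contraction is $\ge -2C>-\infty$; as $\eps>0$ was arbitrary, $x\in\BBB$.

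The one place that needs genuine care --- the main obstacle, modest though it is --- is the regime where both $k$ and $n$ are large. There $-S_k(\log a)(x)$ cannot be bounded below by a constant, as it grows like $-\lambda k$, so one cannot control the two Birkhoff sums separately; instead one must use the near-cancellation $S_n(\log a)(x)-S_k(\log a)(x)\approx\lambda(n-k)\ge 0$, observing that the accumulated error, of size at most $2\delta n$, is absorbed by the slack $n\eps$ precisely because $\delta$ was chosen below $\eps/2$. The remaining cases are entirely routine.
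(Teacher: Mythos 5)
Your proof is correct and uses essentially the same strategy as the paper's: telescope $S_{n-k}\log a(f^k(x))=S_n(\log a)(x)-S_k(\log a)(x)$, use two-sided bounds $S_j\approx j\lambda$ for large $j$ (from convergence of $\lambda_j(x)$), and absorb the small-index fluctuations into a finite constant. The paper folds your three cases into a single uniform estimate by defining $\log\eta$ as a min-minus-max over indices $\le m$ and observing that both of the resulting upper bounds on $S_k$ are nonnegative, so they can be added; your explicit case split ($k\ge N$; $k<N\le n$; $n<N$) is logically equivalent and arguably more transparent, but it is not a different idea.
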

\begin{proof}
Fix $\eps>0$ such that $\lambda(x) > \eps$, and choose $m\in\NN$ such that $|\lambda_n(x) - \lambda| < \eps$ for all $n\geq m$.  Let $\eta>0$ be such that
\[
\log \eta = \min_{0\leq j\leq m} \{S_j(\log a)(x) \} -\max_{0\leq k\leq m} \{ S_k(\log a)(x) \};
\]
thus for every $n\leq m$ and $0\leq k\leq n$, we have
\[
S_{n-k}\log a(f^k(x)) \geq \log \eta.
\]
Furthermore, for all $n \geq m$ and $0\leq k\leq n$, we have
\[
S_n(\log a)(x) \geq n(\lambda(x) - \eps),
\]
and either $0\leq k\leq m$, in which case $S_k(\log a) \leq -\log \eta$, or $m\leq k\leq n$, in which case
\[
S_k(\log a)(x) \leq k(\lambda(x) + \eps).
\]
Both these upper bounds are non-negative, and so together they imply
\[
S_k(\log a)(x) \leq -\log \eta + k(\lambda(x) + \eps),
\]
which yields
\begin{align*}
S_{n-k} (\log a)(f^k(x)) &= S_n\log a(x) - S_k\log a(x) \\
&\geq n(\lambda(x) - \eps) + \log \eta - k(\lambda(x) + \eps) \\
&\geq \log \eta - 2n\eps.
\end{align*}
It follows that $S_{n-k}(\log a)(f^k(x)) + 2n\eps \geq \log \eta$ for every $0\leq k\leq n$, and since $\eps>0$ was arbitrary, we have that $x$ satisfies~\eqref{eqn:tempered}.
\end{proof}

\section{Proof of Theorem~\ref{thm:main}}

In order to draw a connection between the Hausdorff dimension of $Z$ and the topological pressure of $-t\log a$ on $Z$, we need to establish a relationship between the two collections of covers $\DDD(Z,\eps)$ and $\PPP(Z,N,\delta)$.  Thus we prove the following lemma, which relates regular balls $B(x,r)$ to Bowen balls $B(x,n,\delta)$.

\begin{lemma}\label{lem:well-behaved}
Let $f\colon X\to X$ be as in Theorem~\ref{thm:main}.  Then given any $x\in \BBB$ and $\eps>0$, there exists $\delta_0=\delta_0(\eps)>0$ and $\eta=\eta(x,\eps)>0$ such that for every $n\in \NN$ and $0<\delta<\delta_0$,
\begin{equation}\label{eqn:diamball}
B\left(x,\eta\delta e^{-n(\lambda_n(x) + \eps)}\right) \subset B(x,n,\delta) \subset 
B\left(x,\delta e^{-n(\lambda_n(x) -\eps)}\right).
\end{equation}
\end{lemma}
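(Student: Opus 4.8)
The plan is to establish a one-step form of the conformality estimate~\eqref{eqn:conformal} and then iterate it along the orbit of $x$. Since $a$ is continuous and strictly positive on the compact space $X$, it is bounded away from $0$ and bounded above; combining this with~\eqref{eqn:conformal} at each point $f^{j}(x)$ gives, for every $j\geq 0$, a radius $r_{j}=r_{j}(x,\eps)>0$ such that
\[
e^{-\eps}a(f^{j}(x))\,d(w,f^{j}(x)) \;\leq\; d(f(w),f^{j+1}(x)) \;\leq\; e^{\eps}a(f^{j}(x))\,d(w,f^{j}(x))
\]
whenever $d(w,f^{j}(x))<r_{j}$. Since $\prod_{j=0}^{n-1}a(f^{j}(x))=e^{n\lambda_{n}(x)}$, the right-hand inclusion in~\eqref{eqn:diamball} should come from iterating the left-hand inequality above, and the left-hand inclusion from iterating the right-hand one.

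For the upper estimate $B(x,n,\delta)\subset B(x,\delta e^{-n(\lambda_{n}(x)-\eps)})$, take $y\in B(x,n,\delta)$, so $d(f^{j}(y),f^{j}(x))<\delta$ for $0\leq j\leq n$. Provided $\delta<r_{j}$ for these $j$, applying the lower bound successively at $f^{0}(x),\dots,f^{n-1}(x)$ gives
\[
d(f^{n}(y),f^{n}(x)) \;\geq\; e^{-n\eps}\Big(\prod_{j=0}^{n-1}a(f^{j}(x))\Big)d(y,x) \;=\; e^{n(\lambda_{n}(x)-\eps)}\,d(y,x),
\]
and since the left side is $<\delta$ we get $d(y,x)<\delta e^{-n(\lambda_{n}(x)-\eps)}$. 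This direction uses neither~\eqref{eqn:tempered} nor positivity of the Lyapunov exponent.

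For the lower estimate I would induct on $k$, showing $d(f^{k}(y),f^{k}(x))<\delta$ for $k=0,\dots,n$ once $d(y,x)$ is small enough. Running the upper bound $k$ times, with $\eps$ replaced by $\eps/2$ so as to leave a margin, gives $d(f^{k}(y),f^{k}(x))\leq e^{k\eps/2}e^{S_{k}(\log a)(x)}d(y,x)$; substituting $d(y,x)<\eta\delta e^{-n(\lambda_{n}(x)+\eps)}$ and using $S_{k}(\log a)(x)-S_{n}(\log a)(x)=-S_{n-k}(\log a)(f^{k}(x))$, one finds the right side is at most $\delta$ as soon as
\[
\log\eta \;\leq\; \tfrac{\eps}{2}(m+k)+S_{m}(\log a)(f^{k}(x)) \qquad\text{for all } m,k\geq 0,
\]
where $m=n-k$. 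The tempered contraction hypothesis~\eqref{eqn:tempered}, applied with parameter $\eps/2$, states exactly that the infimum over all $m,k\geq 0$ of the right-hand side is finite, so one sets $\log\eta$ equal to that infimum; the resulting $\eta=\eta(x,\eps)$ does not depend on $n$ (and is $\leq 1$, from $m=k=0$).

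The genuinely delicate point is the choice of $\delta_{0}$: the radii $r_{j}$ above depend a priori on $x$ and on $j$, whereas $\delta_{0}$ must depend on $\eps$ alone. So the main obstacle is to promote the pointwise limit~\eqref{eqn:conformal} to a bound valid on a single scale at every point of $X$ simultaneously --- namely, that for each $\eps>0$ there is $\delta_{0}>0$ with $e^{-\eps}a(z)\,d(w,z)\leq d(f(w),f(z))\leq e^{\eps}a(z)\,d(w,z)$ for all $z\in X$ and all $w$ with $d(w,z)<\delta_{0}$ --- which would let us take $r_{j}\geq\delta_{0}$ throughout. I would attempt this by a compactness argument, exploiting that $X$ is compact and $a$ is continuous and bounded away from $0$ and $\infty$; this is the step I expect to require the most care. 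Granting it, both iterations above go through for every $n$ and every $0<\delta<\delta_{0}$, which yields~\eqref{eqn:diamball}.
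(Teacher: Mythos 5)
Your route is the same as the paper's: a one-step multiplicative estimate at a uniform scale $\delta_0$, iterated forward along the orbit. Iterating the lower one-step bound $n$ times gives the right-hand inclusion, and an induction on $k$ using the upper one-step bound together with tempered contraction (applied with parameter $\eps/2$, absorbing into the constant $\eta$ the mismatch between the accumulated bound at time $k$ and the target at time $n$) gives the left-hand inclusion, after replacing $\delta_0(\eps)$ by $\delta_0(\eps/2)$ at the end. Those two iterations, and your bookkeeping with $\eta$ via~\eqref{eqn:tempered}, are correct and match the paper's argument.

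The step you flag as delicate---producing a $\delta_0$ depending only on $\eps$---is indeed the crux, and the ingredients you propose are not quite the right ones. Continuity and boundedness of $a$ on the compact $X$ do not by themselves yield a uniform one-step estimate for $f$: the factor $a(z)$ is only the diagonal limit of the ratio $d(f(w),f(z))/d(w,z)$, and~\eqref{eqn:conformal} is a pointwise limit at each fixed $z$, saying nothing about how uniformly in $z$ it is attained. The paper closes this by passing to the two-variable function $\zeta(w,z)=\log d(f(w),f(z))-\log d(w,z)$ on $X\times X$ off the diagonal, observing (this is where the conformality hypothesis is really used) that $\zeta$ extends to a continuous function on all of $X\times X$ with $\zeta(z,z)=\log a(z)$, and then invoking uniform continuity of that extension on the compact product $X\times X$. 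That gives exactly what you need: there is $\delta_0=\delta_0(\eps)$ such that $d(w,z)<\delta_0$ forces $|\zeta(w,z)-\log a(z)|<\eps$. With this in hand, the rest of your proposal goes through as written.
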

\begin{proof}
Since $f$ is conformal with factor $a(x)>0$, we have
\[
\lim_{y\to x} \frac{d(f(x),f(y))}{d(x,y)} = a(x).
\]
Since $a(x) > 0$ everywhere, we may take logarithms and obtain
\[
\lim_{y\to x} \left(\log d(f(x),f(y)) - \log d(x,y)\right) = \log a(x).
\]
The pre-limit expression is a function on the direct product $X\times X$ with the diagonal $D=\{(x,x)\in X\times X\}$ removed; because $f$ is conformal, this function extends continuously to all of $X\times X$.  That is, there exists a continuous function $\zeta \colon X\times X\to \RR$ such that
\[
\zeta(x,y) = \begin{cases}
\log d(f(x),f(y)) - \log d(x,y) & x\neq y, \\
\log a(x) & x=y.
\end{cases}
\]
Because $X\times X$ is compact, $\zeta$ is uniformly continuous, hence given $\eps>0$ there exists $\delta_0=\delta_0(\eps)>0$ such that for every $0<\delta<\delta_0$ and $(x,y), (x',y')\in X\times X$ with
\[
(d\times d)((x,y),(x',y')) = d(x,x') + d(y,y') < \delta,
\]
we have $|\zeta(x,y) - \zeta(x',y')| < \eps$.  In particular, for $x,y\in X$ with $d(x,y)<\delta$, we have $(d\times d)((x,y),(x,x))<\delta$, and hence
\[
|\log d(f(x),f(y)) - \log d(x,y) - \log a(x)| = |\zeta(x,y) - \zeta(x,x)| < \eps.
\]
We may rewrite this inequality as
\[
\log d(f(x),f(y)) - \log a(x) - \eps < \log d(x,y) < \log d(f(x),f(y)) - \log a(x) + \eps,
\]
and taking exponentials, we obtain
\begin{equation}\label{eqn:danddf}
d(f(x),f(y)) e^{-(\log a(x) + \eps)} < d(x,y) < d(f(x),f(y)) e^{-(\log a(x) - \eps)}
\end{equation}
whenever the middle quantity is less than $\delta$.

We now show the second half of~\eqref{eqn:diamball}, and then go back and prove the first half.  Suppose $y\in B(x,n,\delta)$; that is, $d(f^k(y),f^k(x)) < \delta$ for all $0\leq k\leq n$.  Then repeated application of the second inequality in~\eqref{eqn:danddf} yields
\begin{align*}
d(x,y) &< d(f(x),f(y)) e^{-(\log a(x) - \eps)} \\
&< d(f^2(x),f^2(y)) e^{-(\log a(f(x)) - \eps)} e^{-(\log a(x) - \eps)} \\
&= d(f^2(x),f^2(y)) e^{- S_2 (\log a)(x) - 2\eps} \\
&< \cdots \\
&< d(f^n(x),f^n(y)) e^{-S_n (\log a)(x) - n\eps} \\
&< \delta e^{-n(\lambda_n(x) - \eps)}.
\end{align*}
The second inclusion in~\eqref{eqn:diamball} follows.

To prove the first inclusion in~\eqref{eqn:diamball}, we first suppose that $x$ has tempered contraction, and observe that if $d(x,y)<\delta$, then the first inequality in~\eqref{eqn:danddf} yields
\[
d(f(x),f(y)) < d(x,y) e^{\log a(x) + \eps}.
\]
Then if $d(x,y) < \delta e^{-(\log a(x)+\eps)}$, we have $d(f(x),f(y))<\delta$, and so
\begin{align*}
d(f^2(x),f(y)) &< d(f(x),f(y)) e^{\log a(f(x)) + \eps} \\
&< d(x,y) e^{2(\lambda_2(x) + \eps)}.
\end{align*}
Continuing in this manner, we see that if
\[
d(x,y) < \delta e^{-k(\lambda_k(x) + \eps)}
\]
for every $0\leq k\leq n$, we have $d(f^k(x),f^k(y))<\delta$ for every $0\leq k\leq n$, and hence $y\in B(x,n,\delta)$.  Thus we have proved that
\begin{equation}\label{eqn:almost}
B\left( x, \delta \min_{0\leq k\leq n} e^{-k(\lambda_k(x) + \eps)} \right) \subset B(x,n,\delta),
\end{equation}
which is almost what we wanted.  If the minimum was always achieved at $k=n$, we would be done; however, this may not be the case.  Indeed, if $\log a(f^n(x))<-\eps$ for some $n \in \NN$, then the minimum will be achieved for some smaller value of $k$.  

We now show that the tempered contraction assumption~\eqref{eqn:tempered} allows us to replace $e^{-k(\lambda_k(x) + \eps)}$ with the corresponding expression for $k=n$, at the cost of multiplying by some constant $\eta>0$ and replacing $\eps$ with $2\eps$.  To see what $\eta$ should be, we observe that
\begin{multline*}
\frac{e^{-n(\lambda_n(x) + 2\eps)}}{e^{-k(\lambda_k(x) + \eps)}}
= \frac{e^{-S_n\log a(x) - 2n\eps}}{e^{-{S_k\log a(x) - k\eps}}} \\
= e^{-(S_{n-k}\log a(f^k(x)) + 2n \eps - k\eps)} 
\leq e^{-(S_{n-k}\log a(f^k(x)) + n\eps)}.
\end{multline*}
Since $x$ has tempered contraction, there exists $\eta = \eta(x,\eps)>0$ such that
\begin{equation}\label{eqn:tempered2}
\log \eta < S_{n-k}(\log a)(f^k(x)) + n\eps
\end{equation}
for all $n\in \NN$, $0\leq k\leq n$, and hence
\[
e^{-(S_{n-k}(\log a)(f^k(x))+n\eps)} < 1/\eta.
\]
Thus for every such $n,k$, we have
\[
\eta e^{-n(\lambda_n(x) + 2\eps)} \leq e^{-k(\lambda_k(x) + \eps)},
\]
which along with~\eqref{eqn:almost} shows that
\[
B(x,\delta\eta e^{-n(\lambda_n(x)+2\eps)}) \subset B(x,n,\delta).
\]
Taking $\delta_0=\delta_0(\eps/2)$ gives the stated version of the result.  We remark that if $x$ has \emph{bounded} contraction, then $\eta = \eta(x)$ may be chosen independently of $\eps$.  Furthermore, if $a(x)\geq 1$ for all $x\in X$, then $\eta=1$ suffices.
\end{proof}

Using Lemma~\ref{lem:well-behaved}, we can prove the theorem for sets $Z\subset\AAA((\alpha,\infty))$, where $0<\alpha<\infty$; the general result will then follow from countable stability of topological pressure.  (Note that writing $\beta = \sup_{x\in X} \log a(x)$, we have $\AAA((\alpha,\infty)) \subset \AAA((\alpha,\beta])$; we do not allow maps with singularities, so all Lyapunov exponents are finite.)

\begin{lemma}\label{lem:dimheqt}
Let $f$ satisfy the conditions of Theorem~\ref{thm:main}, and fix a set $Z\subset \AAA((\alpha,\infty)) \cap \BBB$, where $0<\alpha<\infty$.  Let $t^*$ be the unique real number such that $P_Z(-t^*\log a)=0$, whose existence and uniqueness is guaranteed by Corollary~\ref{cor:uniqueroot}.  Then $\dim_H Z = t^*$.
\end{lemma}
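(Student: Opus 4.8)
The plan is to establish the two inequalities $\dim_H Z \le t^*$ and $\dim_H Z \ge t^*$ separately, in both cases using Lemma~\ref{lem:well-behaved} to pass between covers by Bowen balls $B(x_i,n_i,\delta)$ and covers by metric balls $B(x_i,r_i)$, together with the observation that on $Z \subset \AAA((\alpha,\infty))$ the quantity $\lambda_{n_i}(x_i)$ is, for $n_i$ large, pinned between $\alpha-\eps$ and $\beta+\eps$, so that the radii $r_i = \delta e^{-n_i(\lambda_{n_i}(x_i)\pm\eps)}$ shrink to $0$ uniformly as $n_i \to \infty$. First I would fix $\eps>0$ and decompose $Z = \bigcup_{m} Z_m$, where $Z_m$ consists of those $x\in Z$ with $\lambda_n(x)\in(\alpha-\eps,\beta+\eps)$ for all $n\ge m$ (so $\lambda_n(x)$ is close to its limsup/liminf range); by countable stability of both Hausdorff dimension and topological pressure it suffices to compare $\dim_H Z_m$ with the critical value of $P_{Z_m}(-t\log a)$, and then let $m\to\infty$. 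Actually, since $t^*$ is defined via $P_Z$, the cleanest route is to prove directly that $m_H(Z,s)$ and $m_P(Z, \cdot, -s\log a, \delta)$ control each other up to the exponential distortions in~\eqref{eqn:diamball}.

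For the upper bound $\dim_H Z \le t^*$: take $s > t^*$, so $P_Z(-s\log a) < 0$, hence $P_Z(-s\log a,\delta) < 0$ for small $\delta$, which means $m_P(Z, 0, -s\log a, \delta) = 0$; thus there are covers $\{(x_i,n_i)\} \in \PPP(Z,N,\delta)$ with $\sum_i \exp(S_{n_i}(-s\log a)(x_i)) = \sum_i e^{-s\, n_i \lambda_{n_i}(x_i)}$ arbitrarily small. By the right-hand inclusion in~\eqref{eqn:diamball}, $B(x_i,n_i,\delta) \subset B(x_i, r_i)$ with $r_i = \delta e^{-n_i(\lambda_{n_i}(x_i)-\eps)}$, so $\{B(x_i,r_i)\}$ covers $Z$, the $r_i$ are small (uniformly, once $N$ is large, because $\lambda_{n_i}(x_i) > \alpha - \eps > 0$ for $x_i \in Z_m$ and $n_i\ge m$), and $\sum_i (\diam B(x_i,r_i))^s \le \sum_i (2r_i)^s = (2\delta)^s \sum_i e^{-s\,n_i(\lambda_{n_i}(x_i)-\eps)} = (2\delta)^s e^{s n_i \eps} \cdots$ — here one absorbs the factor $e^{s n_i \eps}$ by noting $e^{s n_i \eps} e^{-s n_i \lambda_{n_i}(x_i)} = e^{-s n_i(\lambda_{n_i}(x_i)-\eps)}$, which is exactly $e^{-(s-\text{correction}) n_i \lambda_{n_i}}$ up to the continuity in $s$ of the pressure; more precisely one uses that $P_Z(-s\log a,\delta)<0$ gives summability even after the $\eps$-perturbation, since $P_Z(-s'\log a)<0$ for a slightly smaller $s'>t^*$ too, and $e^{-s' n_i \lambda_{n_i}} \ge e^{-s n_i(\lambda_{n_i}-\eps)}$ once $\eps$ is small relative to $s-s'$ and $\lambda_{n_i}$ is bounded. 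Hence $m_H^{b'}(Z,s) = 0$, so by Proposition~\ref{prop:dimsequal} $\dim_H Z \le s$; letting $s \downarrow t^*$ finishes this half.

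For the lower bound $\dim_H Z \ge t^*$: take $s < t^*$, so $P_Z(-s\log a) > 0$, hence $P_Z(-s\log a,\delta)>0$ and $m_P(Z,0,-s\log a,\delta) = \infty$ for every small $\delta>0$. Given any ball cover $\{(x_i,r_i)\} \in \DDD^b(Z,\eps')$ of $Z$, I would, for each $i$, choose $n_i$ so that $r_i$ is comparable to $\delta e^{-n_i(\lambda_{n_i}(x_i)+\eps)}$ (the left-hand radius in~\eqref{eqn:diamball}), which is possible because that radius decreases to $0$ as $n_i\to\infty$ at a controlled geometric rate (using $\lambda_n(x_i) \le \beta+\eps$ and $\ge \alpha-\eps$ from the $Z_m$ decomposition, plus the tempered-contraction constant $\eta(x_i,\eps)$); then $B(x_i,r_i) \supset$ a ball of the form $B(x_i, \eta\delta e^{-n_i(\lambda_{n_i}(x_i)+\eps)}) \supset B(x_i,n_i,\delta)$ — wait, the inclusion goes the wrong way, so instead one picks $n_i$ minimal with $\eta\delta e^{-n_i(\lambda_{n_i}(x_i)+\eps)} \le r_i$, giving $B(x_i,n_i,\delta) \subset B(x_i,r_i)$... the point is that $\{(x_i,n_i)\}$ then covers $Z$, so it lies in $\PPP(Z,N,\delta)$ for $N = \min n_i$ (which $\to\infty$ as $\eps'\to 0$), and one estimates $\sum_i \exp(-s\, n_i\lambda_{n_i}(x_i)) \le C \sum_i (2r_i)^{s/(1+o(1))} \le C \sum_i (\diam B(x_i,r_i))^{s'}$ for a slightly smaller $s' < s$, using the distortion and the constant $\eta$. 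Since $m_P(Z,0,-s\log a,\delta) = \infty$, the left side cannot be made small, so $\sum_i (\diam B(x_i,r_i))^{s'}$ is bounded below by a positive constant over all such covers, giving $m_H^b(Z,s') > 0$, hence $\dim_H Z \ge s'$; letting $s' \uparrow t^*$ (through the inequalities $s' < s < t^*$ with $\eps \downarrow 0$) completes the proof.

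The main obstacle is the bookkeeping in matching an \emph{arbitrary} ball radius $r_i$ in a given cover with a suitable Bowen-ball order $n_i$, since the map $n \mapsto \delta e^{-n(\lambda_n(x_i)+\eps)}$ is not monotone when $a$ dips below $1$ along the orbit — this is precisely where the tempered contraction hypothesis enters, via the constant $\eta(x_i,\eps)$ from~\eqref{eqn:tempered2}, guaranteeing that the radii do not oscillate too wildly and that a geometrically-sized $n_i$ can always be found. A secondary technical point is that $\eta$ depends on $x_i$, so the argument genuinely needs the decomposition $Z = \bigcup_m Z_m$ (or a further refinement into sets on which $\eta$ is bounded below) plus countable stability to remove that dependence; since $\htop$ and $\dim_H$ and $P_Z(\cdot)$ are all countably stable, and the sets $Z_m$ exhaust $Z$, this causes no real difficulty but must be stated carefully.
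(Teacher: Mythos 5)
Your proposal follows essentially the paper's strategy: decompose $Z$ into sets $Z_m$ on which the relevant constants are uniform, apply Lemma~\ref{lem:well-behaved} to pass between metric balls $B(x,r)$ and Bowen balls $B(x,n,\delta)$, and use countable stability of pressure and Hausdorff dimension. In the upper bound the paper sidesteps the ``absorbing $e^{sn_i\eps}$'' bookkeeping you worry about by evaluating the pressure set-function at the level $-t\eps$ rather than $0$: it proves $m_P(Z_m,-t\eps,-t\log a,\delta) \geq (2\delta)^{-t} m_H(Z_m,t)$, and then uses the freedom to choose $\eps$ with $-t\eps > P_{Z_m}(-t\log a)$ to force the left-hand side to vanish. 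This is cleaner than perturbing the dimension parameter from $s$ to $s'$ as you sketch, though your route can be made to work.

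In the lower bound your choice of $n_i$ has the inclusion backwards, and this is a genuine (if fixable) error. You need the Bowen balls to still cover $Z_m$, which means you want $B(x_i,r_i)\subset B(x_i,n_i,\delta)$; by the first inclusion in~\eqref{eqn:diamball} this requires $r_i \leq \eta\delta e^{-n_i(\lambda_{n_i}(x_i)+\eps)}$. You should therefore pick $n_i$ \emph{maximal} with $\eta\delta e^{-n_i(\lambda_{n_i}(x_i)+\eps)}\geq r_i$, not minimal with this quantity $\leq r_i$; the set of admissible $n$ is nonempty and bounded because the expression tends to $0$ along orbits in $\AAA((\alpha,\infty))$, even though it need not be monotone when $a$ dips below $1$. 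Your stated choice gives $r_i\geq \eta\delta e^{-n_i(\cdots)}$, from which neither inclusion between $B(x_i,r_i)$ and $B(x_i,n_i,\delta)$ follows, and in particular your asserted conclusion ``$B(x_i,n_i,\delta)\subset B(x_i,r_i)$'' is the reverse of what the covering argument requires. With the corrected choice, the bound $a(f^{n}(x))e^{\eps}\leq e^{\beta+\eps}$ shows the ratio of consecutive candidate radii is controlled, so $r_i$ is comparable (up to the fixed factor $e^{\beta+\eps}$) to $\eta\delta e^{-n_i(\lambda_{n_i}(x_i)+\eps)}$; substituting this into the sum and using $t\eps < P_{Z_m}(-t\log a,\delta)$ then makes $m_H^{b'}(Z_m,t)=\infty$, and the rest of your outline goes through as in the paper.
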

\begin{proof}
First we show that $\dim_H Z\leq t^*$.  Given $m\geq 1$, consider the set
\[
Z_m = \{ x\in Z \mid \lambda_n(x) > \alpha \text{ for all $n\geq m$} \},
\]
and observe that $Z=\bigcup_{m=1}^\infty Z_m$.  Fix $t>t^*$; since $P_Z(-t\log a)<0$, there exists $\eps\in (0,\alpha)$ such that $-t\eps > P_Z(-t\log a)$.  By Lemma~\ref{lem:well-behaved}, there exists $\delta_0=\delta_0(\eps)>0$ such that for every $x\in Z_m$, $0<\delta\leq \delta_0$, and $n\geq m$, we have
\begin{equation}\label{eqn:diamball2}
\diam B(x,n,\delta) \leq 2\delta e^{-n(\lambda_n(x)-\eps)}
\leq 2\delta e^{-n(\alpha-\eps)} 
\end{equation}
Thus given $N> m$ and $0<\delta\leq \delta_0$, we have
\[
\PPP(Z_m,N,\delta) \subset \DDD\left(Z_m,2\delta e^{-N(\alpha -\eps)}\right).
\]
For any such $N$ and $\delta$, this allows us to relate the set functions which appear in the definitions of Hausdorff dimension and topological pressure as follows:
\begin{align*}
m_P(Z_m, -t\eps, &-t\log a, N, \delta) \\ 
&=\inf_{\PPP(Z_m,N,\delta)} \sum_{(x_i,n_i)} \exp(-n_i (-t\eps) - t S_{n_i}(\log a) (x_i)) \\
&=\inf_{\PPP(Z_m,N,\delta)} \sum_{(x_i,n_i)} \exp(-n_i t (\lambda_{n_i} (x_i) - \eps)) \\
&\geq \inf_{\PPP(Z_m,N,\delta)} \sum_{(x_i,n_i)} \left(\frac 1{2\delta} \diam B(x_i,n_i,\delta)\right)^t \\
&\geq \inf_{\DDD(Z_m,2\delta e^{-N(\alpha -\eps)})} \sum_{U_i} (2\delta)^{-t} (\diam U_i)^t \\
&= (2\delta)^{-t} m_H\left(Z_m,t, 2\delta e^{-N(\alpha -\eps)}\right)
\end{align*}
Taking the limit as $N\to\infty$ gives
\begin{equation}\label{eqn:mhleqmp}
m_P(Z_m, -t\eps, -t\log a, \delta) \geq (2\delta)^{-t} m_H(Z_m, t),
\end{equation}
for all $0<\delta<\delta_0$.  By our choice of $\eps$, we have
\[
-t\eps > P_Z (-t\log a) \geq P_{Z_m} (-t\log a) = \lim_{\delta\to 0} P_{Z_m} (-t\log a,\delta),
\]
and so for sufficiently small $\delta>0$, we have $-t\eps > P_{Z_m} (-t\log a,\delta)$, and hence $m_H(Z_m,t)=0$ by~\eqref{eqn:mhleqmp}, which implies $\dim_H(Z_m) \leq t$.

Since this holds for all $t>t^*$, we have $\dim_H(Z_m) \leq t^*$, and taking the union over all $m$ gives $\dim_H(Z) \leq t^*$.

For the other inequality, $\dim_H Z \geq t^*$, we fix $t<t^*$ and show that $\dim_H Z\geq t$.  We may assume that $t>0$, or there is nothing to prove.  By Corollary~\ref{cor:uniqueroot}, $t^*$ is the unique real number such that $P_Z(-t^*\log a)=0$, and since the pressure function is decreasing, we have $P_Z(-t\log a)>0$.  Thus we can choose $\eps>0$ such that
\[
0<t\eps <P_Z(-t\log a).
\]
Let $\delta_0 = \delta_0(\eps)$ be as in Lemma~\ref{lem:well-behaved}.  Given $m\geq 1$, consider the set
\[
Z_m = \{x\in Z \mid \text{\eqref{eqn:diamball} holds with $\eta=e^{-m}$ for all $n\in \NN$ and $0<\delta < \delta_0$} \}.
\]
Observe that $Z=\bigcup_{m=1}^\infty Z_m$, and so $P_Z(-t\log a) = \sup_m P_{Z_m} (-t\log a)$, where we once again use countable stability~\cite[Theorem 11.2(3)]{yP98}.  Thus there exists $m\in\NN$ such that $t\eps < P_{Z_m}(-t\log a)$, and we fix $0<\delta<\delta_0$ such that
\begin{equation}\label{eqn:epssmall}
t\eps < P_{Z_m}(-t\log a,\delta).
\end{equation}

Let $\beta = \sup_{x\in X} \log a(x) < \infty$.  Write $s_n(x) = e^{-m}\delta e^{-n(\lambda_n(x) + \eps)}$, and note that
\begin{equation}\label{eqn:notfast}
\frac{s_n(x)}{s_{n+1}(x)} = \frac{e^{-S_n \log a(x) - n\eps}}{e^{-S_{n+1} \log a(x) - (n+1)\eps}} = a(f^n(x)) e^\eps \leq e^{\beta + \eps}
\end{equation}
for every $n$ and $x$.  Furthermore, given $x\in Z_m$ and $r >0$ small, there exists $n=n(x,r)$ such that
\begin{equation}\label{eqn:diamleq}
s_n(x)e^{-(\beta+\eps)} \leq s_{n+1}(x) \leq r \leq s_n(x) = e^{-m} \delta e^{-n(\lambda_n(x) + \eps)}.
\end{equation}
For this value of $n$, Lemma~\ref{lem:well-behaved} implies that
\[
B(x,r) \subset B(x,n,\delta);
\]
consequently, given any $\{(x_i,r_i)\}$ such that $Z_m \subset \bigcup_i B(x_i, r_i)$, we also have $Z_m \subset \bigcup_i B(x_i,n_i,\delta)$, where $n_i = n(x_i,r_i)$ satisfies~\eqref{eqn:diamleq}.  

Furthermore, we have $\lambda_n(x) \leq \beta$ for all $n\in \NN$ and $x\in X$, and so $s_n(x) \geq \delta e^{-(m + n(\beta + \eps))}$.  It follows from~\eqref{eqn:diamleq} that for $n=n(x,r)$, we have
\[
\delta e^{-(m + (n+1)(\beta + \eps))} \leq r,
\]
and hence
\[
n \geq \frac{-\log r + \log \delta - m}{\beta+\eps} - 1.
\]
Denote the quantity on the right by $N(r,\delta)$, and observe that for each fixed $\delta>0$, we have $\lim_{r\to 0} N(r,\delta) = \infty$.  We see that the map $\{(x_i,r_i)\} \mapsto \{(x_i, n_i)\}$ defined above is a map from $\DDD^b(Z_m, r)$ to $\PPP(Z_m,N(r,\delta),\delta)$; thus~\eqref{eqn:diamleq} allows us to make the following computation for all $r>0$ and $0<\delta <\delta_0$:
\begin{align*}
m_H^{b'}(Z_m, t, r) 
&= \inf_{\DDD^b(Z_m,r)} \sum_{(x_i,r_i)} (2r_i)^t \\
&\geq \inf_{\PPP(Z_m, N(r,\delta), \delta)} \sum_{(x_i,n_i)} (2e^{-(\beta+\eps)} s_{n_i}(x))^t \\
&= (2\delta)^t e^{-t(m+\beta+\eps)} \inf_{\PPP(Z_m, N(r,\delta), \delta)} \sum_{(x_i,n_i)} e^{-n_i t(\lambda_{n_i}(x) + \eps)} \\
&= (2\delta)^t e^{-t(m+\beta+\eps)} m_P(Z_m, t\eps, -t\log a, N, \delta).
\end{align*}
Taking the limit as $r\to 0$ (and hence $N(r,\delta)\to\infty$), it follows from~\eqref{eqn:epssmall} that the quantity on the right goes to $\infty$, and so we have $m_H^{b'}(Z_m,t) = \infty$.  Using Proposition~\ref{prop:dimsequal}, this yields
\[
\dim_H Z \geq \dim_H Z_m \geq t,
\]
and since $t<t^*$ was arbitrary, this establishes the lemma.
\end{proof}

\begin{proof}[Proof of Theorem~\ref{thm:main}]
Fix a decreasing sequence of positive numbers $\alpha_k$ converging to $0$, and let $Z_k = Z\cap \AAA((\alpha_k,\infty))$, so that Lemma~\ref{lem:dimheqt} applies to $Z_k$, and we have $Z=\bigcup_{k=1}^\infty Z_k$.  For each $k$, let $t_k$ be the unique real number such that
\[
P_{Z_k}(-t_k\log a)=0;
\]
existence and uniqueness of $t_k$ are given by Corollary~\ref{cor:uniqueroot}.  Then Lemma~\ref{lem:dimheqt} shows that
\[
\dim_H Z_k = t_k.
\]
Writing $t^*= \sup_k t_k$, it follows that $\dim_H Z= t^*$, and it remains to show that
\begin{equation}\label{eqn:tstar}
t^* = \sup \{ t\geq 0 \mid P_Z(-t\log a)>0 \}.
\end{equation}
But given $t\geq 0$, we have
\[
P_Z(-t\log a) = \sup_k P_{Z_k}(-t\log a),
\]
and this is positive if and only if there exists $k$ such that $P_{Z_k}(-t\log a)>0$; that is, if and only if $t< t_k$.  This establishes~\eqref{eqn:tstar}.

Finally, it follows from~\eqref{eqn:tstar} and continuity of $t\mapsto P_Z(-t\log a)$ that $P_Z(-t^*\log a)=0$.  If $Z\subset \AAA((\alpha,\infty))$ for some $\alpha>0$, then Corollary~\ref{cor:uniqueroot} guarantees that $t^*$ is in fact the unique root of Bowen's equation.
\end{proof}

\emph{Acknowledgements.}  I would like to thank my advisor, Yakov Pesin, for the initial suggestion to consider the problems in multifractal formalism from which this result grew, and for tireless guidance and encouragement.  I would also like to thank: Yongluo Cao and the referee for finding several errors in the initial proofs; Dan Thompson for a conversation that revealed some technical issues with the definitions;  and the referee for making several suggestions that improved the exposition.

\def\cprime{$'$}
\providecommand{\bysame}{\leavevmode\hbox to3em{\hrulefill}\thinspace}
\providecommand{\MR}{\relax\ifhmode\unskip\space\fi MR }
\providecommand{\MRhref}[2]{%
  \href{http://www.ams.org/mathscinet-getitem?mr=#1}{#2}
}
\providecommand{\href}[2]{#2}


\def\cprime{$'$}
\begin{thebibliography}{GPR09}

\bibitem[Bow79]{rB79}
Rufus Bowen.
\newblock Hausdorff dimension of quasicircles.
\newblock {\em Inst. Hautes \'Etudes Sci. Publ. Math.}, (50):11--25, 1979.

\bibitem[BPS97]{BPS97}
Luis Barreira, Yakov Pesin, and J{\"o}rg Schmeling.
\newblock On a general concept of multifractality: Multifractal spectra for
  dimensions, entropies, and lyapunov exponents. multifractal rigidity.
\newblock {\em Chaos}, 7(1):27--38, 1997.

\bibitem[BS00]{BS00}
Luis Barreira and J{\"o}rg Schmeling.
\newblock Sets of ``non-typical'' points have full topological entropy and full
  {H}ausdorff dimension.
\newblock {\em Israel J. Math.}, 116:29--70, 2000.

\bibitem[Cli09]{vC09}
Vaughn Climenhaga.
\newblock Multifractal formalism derived from thermodynamics.
\newblock Preprint, 2009.

\bibitem[GP97]{GP97}
Dimitrios Gatzouras and Yuval Peres.
\newblock Invariant measures of full dimension for some expanding maps.
\newblock {\em Ergodic Theory Dynam. Systems}, 17(1):147--167, 1997.

\bibitem[GPR09]{GPR09}
Katrin Gelfert, Feliks Przytycki, and Micha{\l} Rams.
\newblock Lyapunov spectrum for rational maps.
\newblock 2009.
\newblock Preprint.

\bibitem[GR09]{GR09}
Katrin Gelfert and Micha{\l} Rams.
\newblock The {L}yapunov spectrum of some parabolic systems.
\newblock {\em Ergodic Theory Dynam. Systems}, 29:919--940, 2009.

\bibitem[Hu08]{hH08}
Huyi Hu.
\newblock Equilibriums of some non-{H}\"older potentials.
\newblock {\em Trans. Amer. Math. Soc.}, 360(4):2153--2190, 2008.

\bibitem[IK09]{IK09}
Godofredo Iommi and Jan Kiwi.
\newblock The {L}yapunov spectrum is not always concave.
\newblock {\em Journal of Statistical Physics}, 135(3):535--546, 2009.

\bibitem[MS00]{MS00}
N.~Makarov and S.~Smirnov.
\newblock On ``thermodynamics'' of rational maps {I}. {N}egative spectrum.
\newblock {\em Comm. Math. Phys.}, 211:705--743, 2000.

\bibitem[Nak00]{kN00}
Kentaro Nakaishi.
\newblock Multifractal formalism for some parabolic maps.
\newblock {\em Ergodic Theory Dynam. Systems}, 20(3):843--857, 2000.

\bibitem[Pes98]{yP98}
Yakov Pesin.
\newblock {\em Dimension Theory in Dynamical Systems: Contemporary Views and
  Applications}.
\newblock University of Chicago Press, 1998.

\bibitem[PP84]{PP84}
Ya.~B. Pesin and B.~S. Pitskel{\cprime}.
\newblock Topological pressure and the variational principle for noncompact
  sets.
\newblock {\em Funktsional. Anal. i Prilozhen.}, 18(4):50--63, 96, 1984.

\bibitem[PW97]{PW97}
Yakov Pesin and Howie Weiss.
\newblock The multifractal analysis of {G}ibbs measures: Motivation,
  mathematical foundation, and examples.
\newblock {\em Chaos}, 7(1):89--106, 1997.

\bibitem[PW99]{PW99}
Mark Pollicott and Howard Weiss.
\newblock Multifractal analysis of {L}yapunov exponent for continued fraction
  and {M}anneville-{P}omeau transformations and applications to {D}iophantine
  approximation.
\newblock {\em Comm. Math. Phys.}, 207(1):145--171, 1999.

\bibitem[Rue82]{dR82}
David Ruelle.
\newblock Repellers for real analytic maps.
\newblock {\em Ergodic Theory Dynamical Systems}, 2(1):99--107, 1982.

\bibitem[Wei99]{hW99}
Howard Weiss.
\newblock The {L}yapunov spectrum for conformal expanding maps and axiom-{A}
  surface diffeomorphisms.
\newblock {\em J. Statist. Phys.}, 95(3-4):615--632, 1999.

\end{thebibliography}


\begin{thebibliography}{10}

\bibitem{BPS97}
Luis Barreira, Yakov Pesin, and J{\"o}rg Schmeling.  On a general concept
  of multifractality: Multifractal spectra for dimensions, entropies, and
  lyapunov exponents. multifractal rigidity.  \emph{Chaos} \textbf{7}(1) (1997),
  27--38.

\bibitem{BS00} 
Luis Barreira and J{\"o}rg Schmeling.  Sets of ``non-typical'' points have
  full topological entropy and full {H}ausdorff dimension.  \emph{Israel J. Math.}
  \textbf{116} (2000), 29--70. 

\bibitem{rB73}
Rufus Bowen.  Topological entropy for noncompact sets.  \emph{Trans. Amer. Math.
  Soc.} \textbf{184} (1973), 125--136. 

\bibitem{rB79}
Rufus Bowen.  Hausdorff dimension of quasicircles.  \emph{Inst. Hautes \'Etudes Sci.
  Publ. Math.} \textbf{50} (1979), 11--25. 

\bibitem{vC10}
Vaughn Climenhaga.  Multifractal formalism derived from thermodynamics.
  \emph{Preprint} (2010).

\bibitem{DU91}
Manfred Denker and Mariusz Urba\'nski.  Ergodic theory of equilibrium
  states for rational maps.  \emph{Nonlinearity} \textbf{4} (1991), 103--134.

\bibitem{GP97}
Dimitrios Gatzouras and Yuval Peres.  Invariant measures of full dimension
  for some expanding maps.  \emph{Ergodic Theory Dynam. Systems} \textbf{17}(1) (1997),
  147--167.

\bibitem{GPR09}
Katrin Gelfert, Feliks Przytycki, and Micha{\l} Rams.  Lyapunov spectrum
  for rational maps.  \emph{Preprint} (2009).

\bibitem{GR09}
Katrin Gelfert and Micha{\l} Rams.  The {L}yapunov spectrum of some
  parabolic systems.  \emph{Ergodic Theory Dynam. Systems} \textbf{29} (2009),
  919--940.

\bibitem{hH08}
Huyi Hu.  Equilibriums of some non-{H}\"older potentials.  \emph{Trans. Amer.
  Math. Soc.} \textbf{360}(4) (2008), 2153--2190.

\bibitem{IK09}
Godofredo Iommi and Jan Kiwi.  The {L}yapunov spectrum is not always
  concave. \emph{Journal of Statistical Physics} \textbf{135}(3) (2009),
  535--546.

\bibitem{MS00}
N.~Makarov and S.~Smirnov.  On ``thermodynamics'' of rational maps {I}.
  {N}egative spectrum.  \emph{Comm. Math. Phys.} \textbf{211} (2000), 705--743.

\bibitem{MU08}
Volker Mayer and Mariusz Urba{\'n}ski.  Geometric thermodynamic formalism
  and real analyticity for meromorphic functions of finite order.  \emph{Ergodic
  Theory Dynam. Systems} \textbf{28}(3) (2008), 915--946. 

\bibitem{MU10}
Volker Mayer and Mariusz Urba{\'n}ski.  Thermodynamical formalism and multifractal 
  analysis for meromorphic functions of finite order.  \emph{Memoirs of the AMS} 
  \textbf{203}(954) (2010).

\bibitem{kN00}
Kentaro Nakaishi.  Multifractal formalism for some parabolic maps.  \emph{Ergodic 
  Theory Dynam. Systems} \textbf{20}(3) (2000), 843--857.

\bibitem{PP84}
Ya.~B. Pesin and B.~S. Pitskel{\cprime}.  Topological pressure and the
  variational principle for noncompact sets.  \emph{Funktsional. Anal. i Prilozhen.}
  \textbf{18}(4) (1984), 50--63, 96. 

\bibitem{yP98}
Yakov Pesin.  \emph{Dimension theory in dynamical systems: Contemporary views
  and applications}, University of Chicago Press, 1998.

\bibitem{PW97}
Yakov Pesin and Howie Weiss.  The multifractal analysis of {G}ibbs
  measures: Motivation, mathematical foundation, and examples.  \emph{Chaos}
  \textbf{7}(1) (1997), 89--106.

\bibitem{PW99}
Mark Pollicott and Howard Weiss.  Multifractal analysis of {L}yapunov
  exponent for continued fraction and {M}anneville-{P}omeau transformations and
  applications to {D}iophantine approximation.  \emph{Comm. Math. Phys.} \textbf{207}(1)
  (1999), 145--171. 

\bibitem{PRS03}
Feliks Przytycki, Juan Rivera-Letelier, and Stanislav Smirnov.
  Equivalence and topological invariance of conditions for non-uniform
  hyperbolicity in the iteration of rational maps.  \emph{Invent. Math.} \textbf{151}(1)
  (2003), 29--63. 

\bibitem{PRS04}
Feliks Przytycki, Juan Rivera-Letelier, and Stanislav Smirnov.  Equality of 
  pressures for rational functions.  \emph{Ergodic Theory
  Dynam. Systems} \textbf{24}(3) (2004), 891--914. 

\bibitem{dR82}
David Ruelle.  Repellers for real analytic maps, \emph{Ergodic Theory Dynamical
  Systems} \textbf{2}(1) (1982), 99--107.

\bibitem{hhR08}
Hans~Henrik Rugh.  On the dimensions of conformal repellers. {R}andomness
  and parameter dependency.  \emph{Ann. of Math. (2)} \textbf{168}(3) (2008),
  695--748. 

\bibitem{mU91}
M.~Urba{\'n}ski. On the {H}ausdorff dimension of a {J}ulia set with a
  rationally indifferent periodic point.  \emph{Studia Math.} \textbf{97}(3) (1991),
  167--188. 

\bibitem{mU96}
Mariusz Urba{\'n}ski.  Parabolic {C}antor sets.  \emph{Fund. Math.} \textbf{151}(3)
  (1996), 241--277. 

\bibitem{UZ04}
Mariusz Urba{\'n}ski and Anna Zdunik.  Real analyticity of {H}ausdorff
  dimension of finer {J}ulia sets of exponential family.  \emph{Ergodic Theory Dynam.
  Systems} \textbf{24}(1) (2004), 279--315. 

\bibitem{hW99}
Howard Weiss.  The {L}yapunov spectrum for conformal expanding maps and
  axiom-{A} surface diffeomorphisms.  \emph{J. Statist. Phys.} \textbf{95}(3--4) (1999),
  615--632. 

\end{thebibliography}
\end{document}